\numberwithin{equation}{section}
                        \theoremstyle{plain}
\newcommand{\psdraw}[2]
         {\begin{array}{c} \hspace{-1.3mm}
         \raisebox{-4pt}{\psfig{figure=#1.eps,width=#2}}
         \hspace{-1.9mm}\end{array}}
\newtheorem{theorem}{Theorem}[section]
\newtheorem{thm}{Theorem}
\newtheorem{cor}{Corollary}
\newtheorem{lemma}[theorem]{Lemma}
\newtheorem{proposition}[theorem]{Proposition}
\newtheorem{conjecture}{Conjecture}
\newtheorem{definition}{Definition}
\theoremstyle{definition}
\newtheorem{remark}{Remark}
\newcommand\no[1]{}
\newcommand{\lcr}{\raisebox{-5pt}{\mbox{}\hspace{1pt}
                  \epsfig{file=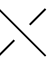}\hspace{1pt}\mbox{}}}
\newcommand{\ift}{\raisebox{-5pt}{\mbox{}\hspace{1pt}
                  \epsfig{file=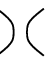}\hspace{1pt}\mbox{}}}
\newcommand{\zer}{\raisebox{-5pt}{\mbox{}\hspace{1pt}
                  \epsfig{file=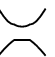}\hspace{1pt}\mbox{}}}
\def\BC{\mathbb C}
\def\BZ{\mathbb Z}
\def\BT{\mathbb T}
\def\BJ{\mathbb J}
\def\CA{\mathcal A}
\def\CK{\mathcal K}
\def\CP{\mathcal P}
\def\CR{\mathcal R}
\def\CS{\mathcal S}
\def\CT{\mathcal T}
\def\fa{\mathfrak a}
\def\fb{\mathfrak b}
\def\fc{\mathfrak c}
\def\fp{\mathfrak p}
\def\ft{\mathfrak t}
\def\fs{\mathfrak s}
\def\bD{{\bar D }}
\def\bTheta{\bar \Theta  }
\def\bbtheta{\bar \theta}
\def\bbp{\bar {\mathfrak p   }}
\def\bbt{\bar {\mathfrak t   }}
\def\bP{\bar \CP}
\def\bT{\bar \CT}
\def\ov{\bar}
\def\la{\langle}
\def\ra{\rangle}
\def\ve{\varepsilon}
\def\be { \begin{equation} }
\def\ee { \end{equation} }
\begin{document}

\title[AJ conjecture]{On the AJ conjecture for cable knots}

\author[Anh T. Tran]{Anh T. Tran}
\address{Department of Mathematical Sciences, University of Texas at Dallas, Richardson, TX 75080, USA}
\email{att140830@utdallas.edu}

\begin{abstract}
We study the AJ conjecture for $(r,2)$-cables of knots, where $r$ is an odd integer. Using skein theory, we show that the AJ conjecture holds true for most $(r,2)$-cables of some classes of two-bridge knots and pretzel knots.
\end{abstract}

\thanks{2010 {\em Mathematics Classification:} Primary 57N10. Secondary 57M25.\\
{\em Key words and phrases: skein module, colored Jones polynomial, character variety, A-polynomial, AJ conjecture, cable knot, two-bridge knot, pretzel knot.}}

\maketitle

\section{Introduction}

\subsection{The colored Jones function} For a knot $K \subset S^3$ and a positive integer $n$, let $J_K(n) \in \BZ[t^{\pm 1}]$ denote the $n$-th colored Jones polynomial of $K$ with framing 0. The polynomial $J_K(n)$ is the quantum link invariant, as defined by Reshetikhin and Turaev \cite{RT}, associated to the Lie algebra $sl_2(\BC)$, with the color $n$ standing for the irreducible $sl_2(\BC)$-representation of dimension $n$. Here we use the functorial normalization, i.e. the one for which $J_{\text{unknot}}(n)= (t^{2n}- t^{-2n})/(t^2 -t^{-2}).$ 

It is known that $J_K(1)=1$ and $J_K(2)$ is the usual Jones polynomial \cite{Jones}. The colored Jones polynomials of higher colors are more or less the Jones polynomials of parallels of the knot. The color $n$ can be assumed to take negative integer values by setting $J_K(-n) = - J_K(n)$ and $J_K(0)=0$. 

We remark that there are different conventions and normalizations of the colored Jones polynomial. In this paper we choose the one which was used in \cite{Le06, LTaj, Tr-twist}.

For a fixed knot $K \subset S^3$, Garoufalidis and Le \cite{GL} proved that the colored Jones function $J_K: \BZ \to \BZ[t^{\pm 1}]$ satisfies a non-trivial linear recurrence relation of the form $\sum_{k=0}^da_k(t,t^{2n})J_K(n+k)=0$, 
where $a_k(u,v) \in \BC[u,v]$ are polynomials with greatest common divisor 1. 

\subsection{Recurrence relations} Let $\CR:=\BC[t^{\pm 1}]$. Consider a function $f: \BZ \to \CR$, and define the linear operators $L$ and $M$ acting on such functions by
$$(Lf)(n) := f(n+1) \quad \text{and} \quad (Mf )(n) := t^{2n} f(n).$$
It is easy to see that $LM = t^2 ML$. The inverse operators $L^{-1}$ and $M^{-1}$ are well-defined. We can consider $L$ and $M$ as elements of the quantum torus
$$ \mathcal T := \mathbb \CR\langle L^{\pm1}, M^{\pm 1} \rangle/ (LM - t^2 ML).$$

The recurrence ideal of $f$ is the left ideal $\CA_f$ in $\CT$ that annihilates $f$:
$$\CA_f:=\{P \in \CT \mid Pf=0\}.$$
We say that $f$ is $q$-holonomic, or $f$ satisfies a non-trivial linear recurrence relation, if $\CA_f \not= 0$. For example, for a fixed knot $K \subset S^3$ the colored Jones function $J_K$ is $q$-holonomic.

\subsection{The recurrence polynomial} Suppose $f: \BZ \to \CR$ is a $q$-holonomic function. Let $\CR(M)$ be the fractional field of the polynomial ring $\CR[M]$. Let $\tilde{\CT}$ be the  set of all Laurent polynomials in the variable $L$ with coefficients in $\CR(M)$:
$$\tilde{\CT} =\left\{\sum_{k \in \BZ} a_k(M)L^k \mid a_k(M) \in \CR(M),~a_k=0\text{~almost always}\right\},$$
and define the product in $\tilde{\CT}$ by $a(M)L^{i} \cdot b(M)L^{k}=a(M)b(t^{2i}M)L^{i+k}$.  

Then it is known that $\tilde{\CT}$ is a principal left ideal domain, and $\CT$ embeds as a subring of $\tilde{\CT}$, c.f. \cite{Ga04}. The ideal extension $\tilde\CA_f:=\tilde\CT\cdot \mathcal A_f$ of $\CA_f$ in $\tilde{\CT}$ is generated by a polynomial 
$$\alpha_f(t,M,L) = \sum_{k=0}^{d} \alpha_{f,k}(t,M) \, L^k,$$
where the degree in $L$ is assumed to be minimal and all the
coefficients $\alpha_{f,k}(t,M)\in \BC[t^{\pm1},M]$ are assumed to
be co-prime. The polynomial $\alpha_f$ is defined up to a polynomial in $\mathbb C[t^{\pm 1},M]$. We call $\alpha_f$ the recurrence polynomial of  $f$.

When $f$ is the colored Jones function $J_K$ of a knot $K$, we let $\CA_K$ and $\alpha_K$ denote the recurrence ideal $\CA_{J_K}$ and the recurrence polynomial $\alpha_{J_K}$ of $J_K$ respectively. We also say that $\CA_K$ and $\alpha_{K}$ are respectively the recurrence ideal and the recurrence polynomial of the knot $K$. Since $J_K(n) \in \BZ[t^{\pm 1}]$, we can assume that $\alpha_K(t,M,L)=\sum_{k=0}^d \alpha_{K,k}(t,M)L^k$
where all the coefficients $\alpha_{K,k} \in \BZ[t^{\pm 1}, M]$ are co-prime.

\subsection{The AJ conjecture}

For a knot $K \subset S^3$, Cooper et al. \cite{CCGLS} introduced the $A$-polynomial $A_K(L,M)$. The $A$-polynomial describes the $SL_2(\BC)$-character variety of the knot complement as viewed from the boundary torus. It carries important information about the topology of the knot. For example, the $A$-polynomial distinguishes the unknot from other knots \cite{BZ, DG}, and the sides of its Newton polygon
give rise to incompressible surfaces in the knot complement [CCGLS].

Motivated by the theory of noncommutative $A$-ideals of Frohman, Gelca and Lofaro \cite{FGL, Ge} and the theory of $q$-holonomicity of quantum invariants of Garoufalidis and Le \cite{GL}, Garoufalidis \cite{Ga04} formulated the following conjecture that relates the $A$-polynomial and the colored Jones polynomial of a knot in $S^3$.

\begin{conjecture}
\label{c1}
{\bf (AJ conjecture)} For every knot $K \subset S^3$, the polynomial $\alpha_K |_{t=-1}$ is equal to $(L-1)A_K(L,M)$, up to a factor depending on $M$ only.
\end{conjecture}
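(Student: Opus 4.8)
The plan is to realize the recurrence ideal $\CA_K$ inside the noncommutative framework of Kauffman bracket skein modules, where the classical limit $t=-1$ is controlled by the character variety. Let $M_K = S^3 \setminus K$ be the knot complement. First I would consider the Kauffman bracket skein module $\CS_t(M_K)$ together with the peripheral map $\CS_t(\partial M_K \times I) \to \CS_t(M_K)$ induced by the inclusion of the boundary torus. The skein algebra of the torus is a quantum torus, and the kernel of the composition of this map with the action on the empty skein gives a left ideal $\CP_K \subset \CT$, the \emph{peripheral} (or noncommutative $A$-) ideal of Frohman, Gelca and Lofaro. The first step is to show that, after identifying the two copies of the quantum torus (the one generated by $L,M$ and the one coming from $\partial M_K$), one has an inclusion $\CP_K \subseteq \CA_K$; this should follow because any peripheral element annihilates the colored Jones function, the latter being the evaluation of cabled skeins of $K$ in $\CS_t(M_K)$.

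The second step is the classical degeneration. By the theorems of Bullock and of Przytycki--Sikora, the specialization $\CS_{-1}(M_K)$ is isomorphic, modulo nilpotents, to the coordinate ring $\BC[X(M_K)]$ of the $SL_2(\BC)$-character variety, and the peripheral map becomes restriction to the boundary torus. Under this identification the generator of $\CP_K|_{t=-1}$ becomes, up to the factor $(L-1)$ coming from the abelian (reducible) component and up to factors depending on $M$ only, the defining polynomial $A_K(L,M)$ of the image of $X(M_K)$ in $X(\partial M_K)$. Thus $\CP_K|_{t=-1}$ should generate exactly $(L-1)A_K$.

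The third step, and the crux, is the reverse inclusion $\CA_K \subseteq \CP_K$ up to right divisors and $M$-factors, i.e. showing that the recurrence polynomial $\alpha_K$ is \emph{precisely} the generator of $\tilde\CT\cdot\CP_K$ rather than a proper right divisor of it. This requires (i) that $\CS_t(M_K)$ be finitely generated and tame enough that the peripheral ideal already captures all recurrences of $J_K$, and (ii) that the specialization $t\to -1$ commute with passage to the extended ideal $\tilde\CA_K$ in the principal left ideal domain $\tilde\CT$, so that no extra factors appear and no information is lost.

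The hard part will be exactly step three. The inclusion $\CP_K\subseteq\CA_K$ yields only $\alpha_K \mid (L-1)A_K$ after specialization, and controlling the gap between the recurrence ideal and the peripheral ideal --- equivalently, ruling out that the colored Jones function satisfies a strictly shorter recurrence than the peripheral one predicts, and excluding spurious $M$-independent cancellations in the $t\to -1$ limit --- is precisely the point at which no general argument is known. For this reason I expect that a proof valid for \emph{every} knot lies beyond these methods, and that the strategy succeeds only when $\CS_t(M_K)$ is small enough to be computed explicitly, so that both inclusions can be verified directly; this is indeed the route by which the conjecture has been confirmed for the restricted families of knots treated in this paper.
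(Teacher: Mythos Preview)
The statement you were asked to prove is not a theorem but an open \emph{conjecture}: the paper states Conjecture~\ref{c1} and then proves it only for the restricted families of cable knots described in Theorems~1--3, under a long list of hypotheses. There is therefore no ``paper's own proof'' to compare against; the conjecture remains open in general, and your closing paragraph correctly anticipates this.

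That said, your outline is an accurate summary of the machinery the paper actually deploys for the special cases it treats. The inclusion $\CP_K\subset\CA_K$ you describe in step one is exactly what the paper records in Subsection~2.2.1 (citing \cite{FGL,Ge}); the classical degeneration of step two is the content of Subsection~2.3 together with the identification $B_K=(L-1)A_K$ from \cite{LTaj}; and the ``crux'' you identify in step three --- closing the gap between the peripheral ideal and the recurrence ideal --- is precisely what the paper accomplishes via Propositions~\ref{surj-2} and~\ref{surj}, which require the additional hypotheses (reduced character ring, finitely generated localized skein module, irreducible non-abelian character variety, the inequality $A_K(L,M)\neq A_K(-L,M)$, and the degree-growth condition on $J_K$) that appear in Theorem~\ref{main}. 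Your diagnosis that the general argument fails at step three, and that one must restrict to knots whose skein module is tractable, matches exactly what the paper does. The only refinement worth noting is that the paper does not attempt to show $\CA_K=\CP_K$ directly; instead it passes through the auxiliary $\gamma$- and $C$-polynomials (Sections~\ref{cable}--\ref{action}) adapted to the $(r,2)$-cabling, and uses the explicit cabling formula~\eqref{cables} together with degree estimates (Lemma~\ref{deg}, Proposition~\ref{divisible}) to pin down $\alpha_{K^{(r,2)}}$.
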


The AJ conjecture has been verified for the trefoil and figure eight knots  \cite{Ga04}, torus knots \cite{Hi, Tr}, some classes of two-bridge knots and pretzel knots \cite{Le06, LTaj}, the knot $7_4$  \cite{GK}, and most cable knots over torus knots \cite{RZ}, over the figure eight knot \cite{Ru, Traj8}, over some two-bridge knots \cite{Dr}, and over twist knots \cite{Tr-twist}. 

\subsection{Main results} Suppose $K$ is a knot with framing 0, and $r,s$ are two integers with $c$ their greatest common divisor. The $(r,s)$-cable $K^{(r,s)}$ of $K$ is the
link consisting of $c$ parallel copies of the $(\frac{r}{c},\frac{s}{c})$-curve on the torus boundary of a tubular neighborhood of $K$. Here an $(\frac{r}{c},\frac{s}{c})$-curve is a curve that is homologically
equal to $\frac{r}{c}$ times the meridian and $\frac{s}{c}$ times the longitude on the torus boundary.
The cable $K^{(r,s)}$ inherits an orientation from $K$, and we assume that each component of $K^{(r,s)}$ has framing 0. Note that if $r$ and $s$ are co-prime, then $K^{(r,s)}$ is again a knot. 

In \cite{Le06, LTaj, Tr-twist}, Thang Le and the author used skein theory to study the AJ conjecture for knots. In this paper, we keep following this geometric approach and study the AJ conjecture for $(r,2)$-cables of knots, where $r$ is an odd integer. Before stating the main results of the paper, we need to introduce the set $\CK$. 

For a Laurent polynomial $f(t) \in \BC[t^{\pm 1}]$, let $d_+[f]$ and $d_{-} [f]$ be respectively the maximal and minimal degree of $f$ in $t^{-4}$. It should be noted that these $d_{\pm}[f]$ are equal to $(-1/4)$ times the corresponding ones in \cite{Le06, LTaj, Tr-twist}.

For a knot $K \subset S^3$, Garoufalidis \cite{Ga-quasi} proved that the degrees of its colored Jones function are quadratic quasi-polynomials. This means that, for each $\ve \in \{\pm\}$ there exist a constant $N_K^{\ve}>0$ and periodic functions $a_{K}^{\ve}(n), b_{K}^{\ve}(n), c_{K}^{\ve}(n)$ such that 
$$d_{\ve}[J_K(n)] = a_{K}^{\ve}(n) \, n^2 + b_{K}^{\ve}(n) n + c_{K}^{\ve}(n)$$
for $n \ge N_K^{\ve}$. 

Following \cite{Ga-slope}, we say that a knot $K$ is \textit{mono-sloped} if $a_{K}^{\ve}(n)$ is a constant function for each $\ve \in \{\pm\}$. In which case we also write $a_{K}^{\ve}$ for $a_{K}^{\ve}(n)$.

\begin{definition}  

With the above notations we define $\CK$ to be the set of all mono-sloped knots $K \subset S^3$ such that $b_{K}^+ (n) \le 0$ and $b_{K}^- (n) \ge 0$. 
\end{definition}

The set $\CK$ contains, for example, adequate knots (by \cite[Lemma 3.2]{Tr-twist}) and $(-2,3, 2m+3)$-pretzel knots (by \cite{KT2}). We remark that the following conjecture is proposed in \cite{KT1}. Note that $b_{U}^+ (n)=1/2$ and $b_{U}^- (n)=-1/2$ for the trivial knot $U$.

\begin{conjecture} \cite{KT1}
\label{KT-conj}
For every non-trivial knot $K \subset S^3$ we have $$b_{K}^+ (n) \le 0 \quad \text{and} \quad b_{K}^- (n) \ge 0.$$
\end{conjecture}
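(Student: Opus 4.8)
The plan is to organize the argument by the sign $\ve\in\{\pm\}$, using that $d_\ve[J_K(n)]$ is a quadratic quasi-polynomial whose linear coefficient $b_K^\ve(n)$ is, under the strong slope conjecture of \cite{KT1} (refining the slope conjecture of \cite{Ga-slope}), a normalization of the Euler characteristic of an essential surface in the knot exterior realizing the Jones slope $a_K^\ve$. From this point of view Conjecture~\ref{KT-conj} just says that for a nontrivial knot such a surface can never be a disk, so its Euler characteristic is $\le 0$, with the mirror statement on the other side; the trivial knot $U$ is the unique exception because its exterior is a solid torus whose meridian disk has $\chi=1$, which forces $b_U^+(n)=1/2$ and $b_U^-(n)=-1/2$.

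The first step is to reduce to prime knots. Since $J_{K_1\# K_2}(n)=J_{K_1}(n)\,J_{K_2}(n)/J_U(n)$ and the extreme $t$-degrees are additive under multiplication and exact division, for $n$ large one gets $d_\ve[J_{K_1\# K_2}(n)]=d_\ve[J_{K_1}(n)]+d_\ve[J_{K_2}(n)]-d_\ve[J_U(n)]$, hence $b^+_{K_1\# K_2}(n)=b^+_{K_1}(n)+b^+_{K_2}(n)-1/2$ and $b^-_{K_1\# K_2}(n)=b^-_{K_1}(n)+b^-_{K_2}(n)+1/2$; so the inequalities for the prime summands give them for the connected sum, with room to spare. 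Next I would dispose of the families whose extreme degrees are computable. For a knot with an adequate diagram $D$, the minimal and maximal $t$-degrees of $J_K(n)$ are computed by the all-$A$ and all-$B$ Kauffman bracket states of $D$, and the resulting quadratic has linear coefficient of the shape $1-v_{\mathbb G}$, where $v_{\mathbb G}\ge 2$ counts the loops of the relevant state graph of a reduced diagram of a nontrivial knot, so the coefficient is $\le -1<0$ on one side and $\ge 1>0$ on the other; this is exactly \cite[Lemma 3.2]{Tr-twist} and covers all alternating knots. For torus knots one uses the known closed formula for $J_{T(p,q)}(n)$, whose $\pm$-degrees are honest quadratics in $n$ that one checks directly to have a linear term of the correct sign. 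For iterated cables, and in particular the $(r,2)$-cables treated below, one feeds the cabling formula — which writes $J_{K^{(r,s)}}(n)$ as a finite framing-twisted sum of values $J_K(m)$ with $m$ linear in $n$ — into the quasi-polynomial bookkeeping: the dominant term acquires a linear coefficient equal to a positive multiple of $b_K^\ve$ plus an explicit framing contribution, so the conjecture passes to cables of knots in $\CK$ by the same degree computation used below for the AJ conjecture, and then to iterated cables by induction. Finally the $(-2,3,2m+3)$-pretzel knots are handled in \cite{KT2}, and an analogous all-$A$/all-$B$ state analysis should extend this to further Montesinos and pretzel families.

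The step I expect to be the real obstacle is the general hyperbolic case. At present there is no way to pin down the linear coefficient $b_K^\ve(n)$ of the degree for an arbitrary knot without either a diagrammatic adequacy-type input that makes the all-$A$/all-$B$ state genuinely compute the extreme degree, or a geometric input identifying the essential surface behind the Jones slope and controlling its topology — that is, precisely a proof of the strong slope conjecture, which remains open. So a realistic outcome of this plan is a proof of Conjecture~\ref{KT-conj} for every knot obtained from adequate knots and torus knots by connected sum and iterated cabling, together with explicit pretzel and Montesinos families, with the full statement still contingent on progress toward the strong slope conjecture.
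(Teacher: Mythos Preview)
The statement you are trying to prove is Conjecture~\ref{KT-conj}, which the paper explicitly presents as an \emph{open conjecture} quoted from \cite{KT1}. The paper does not prove it and does not claim to; immediately after stating it, the paper only remarks that it ``has been verified for several classes of knots, including adequate knots and their iterated cables, and 2-fusion knots and their cables.'' There is therefore no ``paper's own proof'' to compare your proposal against.

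Your write-up is not a proof of the conjecture either, and you say so yourself in the last paragraph: the general hyperbolic case is out of reach without the strong slope conjecture, so what you actually outline is a verification for knots built from adequate knots and torus knots by connected sum and cabling, plus some explicit pretzel/Montesinos families. That is essentially the state of the art recorded in \cite{KT1, KT2} and in the paper's own remark. As a proof \emph{proposal} for the full statement it has a genuine gap, namely the entire general case; as a summary of the known partial results it is reasonable, though a couple of details are loose (e.g.\ the cabling step requires that the dominant term in the cabling sum really is unique so that no cancellation occurs, which is exactly the kind of hypothesis encoded in the set $\CK$ and the inequalities on $r$ in Lemma~\ref{deg}, not something that holds for arbitrary cables).

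In short: there is nothing to compare here, because the paper leaves this statement as a conjecture, and your proposal does not close that gap.
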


Conjecture \ref{KT-conj} has been verified for several classes of knots, including  adequate knots and their iterated cables, and 2-fusion knots and their cables. We refer the readers to \cite{KT1, KT2} and references therein for more details.

\begin{definition}
For $K \in \CK$ and $\ve \in \{ \pm  \}$, we let 
\begin{eqnarray*}
\delta_{K,1}^{\ve} &=&  \max \big\{|b_{K}^{\ve}(i)-b_{K}^{\ve}(j)| \, : \, i,j \equiv 1 \pmod{2} \big\},\\
\delta_{K,2}^{\ve} &=& \max \big\{\ve (b_{K}^{\ve}(i) + b_{K}^{\ve}(j)) + |c_{K}^{\ve}(i)-c_{K}^{\ve}(j)| \, : \, i,j \equiv 1 \pmod{2} \big\}
\end{eqnarray*}
and $$
r_{K}^{\ve} = 8a_{K}^{\ve} +  \ve \big( 2 \delta_{K,1}^{\ve} + \max \{0, \delta_{K,2}^{\ve} \} \big).$$
\end{definition}

For a knot $K \subset S^3$, we let $X_K$ denote the complement of $K$. That is $X_K = S^3 \setminus K$. Following the definitions and notations in \cite{LTaj}, we state the first result of the paper.

\begin{thm}
\label{main}
Suppose $K$ is a  knot in $\CK$ satisfying all the following conditions:

(i) $K$ is hyperbolic,

(ii) the $SL_2(\BC)$-character variety of $\pi_1(X_K)$ consists of two irreducible components (one abelian and one non-abelian),

(iii) the universal $SL_2(\BC)$-character ring $\fs(X_K)$ is reduced,

(iv) the localized skein module $\overline{\CS(X_K)}$ is finitely generated over $\bar{D}$,

(v) $A_K(L,M) \not= A_K(-L,M)$, and 

(vi) the breadth of the degree of $J_K(n)$ is not a linear function in $n$.

Then the AJ conjecture holds true for the $(r,2)$-cable of $K$ if $r$ is an odd integer satisfying $(r - r_{K}^+)(r - r_{K}^-)>0$. 
\end{thm}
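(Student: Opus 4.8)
The plan is to compute both sides of the AJ conjecture for the cable $C := K^{(r,2)}$ and match them up, using the skein-theoretic machinery of \cite{Le06, LTaj, Tr-twist}. I would work in two parallel tracks: the classical side (computing $(L-1)A_C(L,M)$ from the character variety of $X_C = X_K$, which is the same hyperbolic manifold as $X_K$ since cabling does not change the complement of the companion's exterior — more precisely $X_C$ is $X_K$ with a solid torus reglued, so its character variety is built from that of $X_K$ together with the cabling relation) and the quantum side (computing the recurrence polynomial $\alpha_C$ via the Frohman--Gelca--Lofaro style non-commutative $A$-ideal, i.e. the peripheral ideal of the skein module).

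On the classical side, the first step is to recall the cabling formula for the $A$-polynomial: a point on the character variety of $X_C$ records a representation of $\pi_1(X_C)$, whose restriction to the cabling torus forces the eigenvalue pair $(M_C, L_C)$ of the cable's peripheral system to be related to the companion's $(M_K, L_K)$ by $M_K = M_C^2$, $L_K M_C^r = \pm L_C$ (for an $(r,2)$-cable with the $0$-framing convention, up to the standard normalizations; the factor depending on $M$ only, and the ambiguity in the square root, will be absorbed into the "up to $M$" clause of the conjecture). Hypothesis (ii) — exactly two components, one abelian — means $A_K = A_K^{\mathrm{ab}} \cdot A_K^{\mathrm{irr}}$ with $A_K^{\mathrm{ab}} = L-1$, so the non-abelian part of $A_C$ factors through $A_K^{\mathrm{irr}}$ composed with the cabling substitution, together with a contribution from the Seifert-fibered piece (a factor like $L_C + M_C^r$ or similar coming from reducible-on-the-cable-torus representations). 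Hypothesis (v), $A_K(L,M) \neq A_K(-L,M)$, is exactly what is needed so that these two factors do not accidentally coincide or cancel, so that the full $(L-1)A_C(L,M)$ has the expected shape. I would assemble this into an explicit formula for $(L-1)A_C(L,M)$ in terms of $A_K$.

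On the quantum side, the key input is the cabling formula for the colored Jones function: $J_C(n)$ is a finite linear combination $\sum_j \beta_j(t) J_K(2j+1)$ (indexed by the relevant odd colors, $r$ being odd), obtained by expanding the $(r,2)$-cable pattern in the skein module of the solid torus in the Chebyshev/Jones--Wenzl basis. Applying the skein-theory argument of \cite{LTaj} — present the universal character ring (hypothesis (iii), reduced) and the localized skein module $\overline{\CS(X_K)}$ (hypothesis (iv), finitely generated over $\bar D$), deduce that the peripheral ideal has the right size, and then transport along the cabling substitution — yields $\alpha_C|_{t=-1}$ up to controlling the degrees. This is where the set $\CK$ and the quantities $r_K^\pm$ enter: to make sure that no cancellation or collapse of $L$-degree happens when we combine the several colored Jones polynomials $J_K(2j+1)$, one needs the degrees $d_\pm[J_K(2j+1)]$ (quadratic quasi-polynomials by \cite{Ga-quasi}, with the mono-sloped hypothesis $K \in \CK$) to be "generic" relative to the shift induced by the cabling twist. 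The condition $(r - r_K^+)(r - r_K^-) > 0$ is precisely the numerical inequality guaranteeing that the dominant term in each $\ve$-direction comes from a single summand, so that the $L$-span of $\alpha_C$ does not drop; hypothesis (vi), that the breadth is not linear in $n$ (equivalently $a_K^+ + a_K^- \neq 0$ or the quadratic term is genuinely present), is needed so that "dominant" makes sense and the quadratic growth actually separates the colors. I would carry out the degree bookkeeping: compute $d_\pm[J_C(n)]$ from the cabling sum, show it is attained by one term when $r$ lies outside the interval $[r_K^-, r_K^+]$ (or $[r_K^+, r_K^-]$), and conclude that $\alpha_C$ has exactly the predicted $L$-degree, forcing it to equal the classical answer up to an $M$-factor.

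The main obstacle I anticipate is the degree argument on the quantum side: one must show that the recurrence polynomial $\alpha_C$ does \emph{not} have smaller $L$-degree than expected (a lower bound on the order of the minimal recurrence), which does not follow from merely exhibiting a recurrence — it requires the skein-module input (iii)--(iv) to pin down the peripheral ideal exactly, plus the delicate no-cancellation estimate controlled by $r_K^\pm$. Matching the two sides once both are computed is then essentially bookkeeping, using (ii) and (v) to see that the abelian factor $L-1$ and the Seifert-piece factor are exactly accounted for; the hyperbolicity (i) is used, as in \cite{LTaj}, to guarantee the non-abelian character variety is a curve with the expected properties so that $A_K^{\mathrm{irr}}$ is well-behaved under the substitution.
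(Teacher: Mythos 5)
Your overall strategy coincides with the paper's: reduce to the odd-part function $\BJ_K(n):=J_K(2n+1)$ via the cabling formula for the colored Jones polynomial, use the degree estimates coming from $K\in\CK$ and $(r-r_K^+)(r-r_K^-)>0$ to show that $\alpha_{K^{(r,2)}}$ factors as $\alpha_{\BJ_K}M^r(L+t^{-2r}M^{-2r})$, use the localized-skein-module machinery of \cite{LTaj} under hypotheses (iii)--(iv) to control the peripheral ideal, and match against the Ni--Zhang cabling formula for $A_{K^{(r,2)}}$. So the architecture is the same as in the paper.

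The one place where your plan, as written, would not close is the final matching step. You propose to ``conclude that $\alpha_C$ has exactly the predicted $L$-degree,'' but nothing in the available toolbox yields the exact $L$-degree of a recurrence polynomial; the only lower bound one can extract is that $\alpha_{\BJ_K}$ has $L$-degree $>1$ (this is what hypothesis (vi) gives, via \cite{Traj8}), together with the fact that $L-1$ divides $\ve(\alpha_{\BJ_K})$. The paper bridges the gap between ``degree $>1$'' and ``full degree'' by an irreducibility argument: the skein-module side gives only the divisibility $\ve(\alpha_{\BJ_K}) \mid (L-1)R_K(L,M^2)$, where $R_K(\ell,M)=\mathrm{Res}_\lambda\left(A_K(\lambda,M),\lambda^2-\ell\right)$, and hypothesis (v) is used precisely to prove that $R_K(L,M^2)$ is \emph{irreducible} and has the same $L$-degree as $A_K$ (Lemma \ref{even_M} and Proposition \ref{A-irreducible}); this degree equality is also what makes $\bbtheta_2$ surjective in Proposition \ref{surj-2}. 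Irreducibility plus ``degree $>1$'' then forces $\ve(\alpha_{\BJ_K})=(L-1)R_K(L,M^2)$ up to a factor in $M$. In your write-up, (v) is placed on the classical side, as preventing factors of $A_C$ from coinciding; that is not where it is needed. Without the irreducibility of the resultant, the divisibility alone would leave open the possibility that $\ve(\alpha_{\BJ_K})/(L-1)$ is a proper factor of $R_K(L,M^2)$, and the argument would stall.
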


Applying Theorem \ref{main} to two-bridge knots, we obtain the following.

\begin{thm}
\label{main:two-bridge}
Suppose $K$ is a hyperbolic two-bridge knot  such that the non-abelian $SL_2(\BC)$-character variety is irreducible. Then the AJ conjecture holds true for the $(r,2)$-cable of $K$ if $r$ is an odd integer satisfying $(r-4c^+_K)(r+4c^-_K)>0$, where $c^{\pm}_K$ denotes the number of positive/negative crossings of $K$.
\end{thm}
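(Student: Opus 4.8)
The plan is to deduce Theorem~\ref{main:two-bridge} from Theorem~\ref{main} by checking that every hyperbolic two-bridge knot $K$ with irreducible non-abelian character variety lies in $\CK$ and satisfies conditions (i)--(vi), and then translating the numerical hypothesis $(r-r_K^+)(r-r_K^-)>0$ into the stated form $(r-4c_K^+)(r+4c_K^-)>0$. First I would recall that two-bridge knots are alternating, hence adequate, so by the remark after the definition of $\CK$ they belong to $\CK$; moreover adequacy gives exact control of the extreme degrees of $J_K(n)$ via the reduced checkerboard graphs, which is what will let me compute $a_K^{\pm}$, $b_K^{\pm}$ and $c_K^{\pm}$ explicitly. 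Condition (i) is a hypothesis; condition (ii) follows from the hypothesis that the non-abelian character variety is irreducible together with the standard fact that the full $SL_2(\BC)$-character variety of a two-bridge knot complement has exactly one abelian component; conditions (iii) and (iv) for two-bridge knots are established in \cite{LTaj} (the universal character ring is reduced and the localized skein module is finitely generated over $\bar D$ for two-bridge knots); condition (v), $A_K(L,M)\neq A_K(-L,M)$, holds because the $A$-polynomial of a two-bridge knot has odd $L$-degree parity governed by the knot's signature/crossing data, or more simply because $A_K(-L,M)=A_K(L,M)$ would force a symmetry incompatible with the known Newton polygon of two-bridge $A$-polynomials; condition (vi) holds since for an adequate non-torus knot the breadth $d_+[J_K(n)]-d_-[J_K(n)]$ is a genuine quadratic (the leading coefficient $a_K^+-a_K^-$ is nonzero for hyperbolic, hence non-torus, adequate knots).

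Next I would carry out the degree computation. For an adequate diagram $D$ of $K$ with $c^+$ positive and $c^-$ negative crossings, the formulas of \cite{Tr-twist} (Lemma~3.2 there) express $d_{\pm}[J_K(n)]$ as explicit quadratic quasi-polynomials; after rescaling by the factor $-1/4$ noted in the excerpt, one reads off that $a_K^{\pm}$ is a constant (so $K$ is mono-sloped, consistent with $K\in\CK$), and that $b_K^+(n)$ and $b_K^-(n)$ are in fact \emph{constant} in $n$ for an alternating knot --- this is the key simplification for two-bridge knots. Consequently $\delta_{K,1}^{\pm}=0$. Then $\delta_{K,2}^{\pm}$ reduces to $2\ve b_K^{\ve}$ plus the oscillation of $c_K^{\ve}$, and a direct substitution into $r_K^{\ve}=8a_K^{\ve}+\ve(2\delta_{K,1}^{\ve}+\max\{0,\delta_{K,2}^{\ve}\})$ collapses, using the adequacy formulas $8a_K^+ = \ldots$ expressed in terms of $c^+,c^-$ and the number of circles in the all-$A$/all-$B$ states, to $r_K^+ = 4c_K^+$ and $r_K^- = -4c_K^-$. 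With these identifications the inequality $(r-r_K^+)(r-r_K^-)>0$ from Theorem~\ref{main} becomes exactly $(r-4c_K^+)(r+4c_K^-)>0$, which is the claim.

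The main obstacle I anticipate is the bookkeeping in the degree computation: one must track the extreme degrees of the colored Jones polynomial of an alternating knot precisely enough to see that the linear coefficients $b_K^{\pm}$ are constant in $n$ (not merely periodic) and to identify the constant and quadratic coefficients with crossing-number data, all under the $(-1/4)$-rescaling convention that differs from the source references. This requires carefully matching the normalization of $J_K(n)$ used here with that of \cite{Tr-twist} and \cite{Le06}, and correctly handling the contribution of the framing-zero correction. A secondary, more conceptual point needing care is verifying condition (v): while $A_K(L,M)\neq A_K(-L,M)$ is expected for all non-trivial two-bridge knots, I would want a clean argument --- for instance, noting that $A_K(-L,M)=A_K(L,M)$ would imply $A_K$ is a polynomial in $L^2$, contradicting that for a two-bridge knot the $A$-polynomial is irreducible (over the non-abelian part) of $L$-degree equal to the number of characters which is odd in the relevant cases, or invoking the explicit recursive description of two-bridge $A$-polynomials from \cite{CCGLS}. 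Once (i)--(vi) are in place, the deduction is purely a matter of plugging in, so the theorem follows.
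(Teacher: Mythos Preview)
Your overall strategy is exactly the paper's: reduce to Theorem~\ref{main} by checking (i)--(vi) for two-bridge knots and then identifying $r_K^{\pm}$ with $\pm 4c_K^{\pm}$ via the adequacy degree formulas of \cite[Lemma~3.2]{Tr-twist}. The paper cites \cite{Le06} (rather than \cite{LTaj}) for (iii) and (iv): the skein module of a two-bridge knot complement is free of finite rank over $\CR[M^{\pm1}]^\sigma$, which immediately gives both the reducedness of $\fs(X_K)$ and the finite generation of $\overline{\CS(X_K)}$. For (vi) the paper likewise invokes adequacy (\cite[Lemma~5.4]{Li} and \cite[Proposition~2.1]{Le06}) to get a genuinely quadratic breadth, as you do.

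The one place your proposal is genuinely incomplete is condition~(v). None of the arguments you sketch actually works: the $L$-degree of the $A$-polynomial of a two-bridge knot need not be odd, and ``incompatibility with the Newton polygon'' is not a proof. The paper's argument is short and specific: by \cite[Section~3]{Na} one has $A_K(L,\sqrt{-1})=(L-1)^k$ for some $k\ge 1$, so expanding $(L-1)^k$ shows that $A_K(L,M)$ must contain a monomial $L^iM^j$ with $i$ odd, whence $A_K(-L,M)\neq A_K(L,M)$. You should replace your heuristic for (v) with this.
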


As a corollary of Theorem \ref{main:two-bridge}, we have:

\begin{cor}
The AJ conjecture holds true for the $(r,2)$-cable of the double twist knot $J(k,l)$ (with $k \not= l$) if $r$ is an odd integer satisfying 
$$\begin{cases} r \big( r-4(k+l-1) \big)>0 &\mbox{if } k>0, l>0, \\ 
r \big( r-4(k+l+1) \big)>0 &\mbox{if } k<0, l<0,\\
(r+4k)(r+4l)>0 &\mbox{if } kl < 0. 
\end{cases} $$
\end{cor}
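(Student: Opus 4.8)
The plan is to derive the corollary directly from Theorem~\ref{main:two-bridge} by specializing to the double twist knots $J(k,l)$. Recall that $J(k,l)$ denotes the two-bridge knot obtained by the standard double twist construction, and that for $k\neq l$ these are hyperbolic knots whose non-abelian $SL_2(\BC)$-character variety is irreducible (this is well known; see e.g. the references on two-bridge knots cited in the introduction). Hence the hypotheses of Theorem~\ref{main:two-bridge} are satisfied, and it remains only to compute the quantities $c^+_K$ and $c^-_K$ --- the numbers of positive and negative crossings in a minimal (or at least an alternating) diagram of $J(k,l)$ --- and then translate the inequality $(r-4c^+_K)(r+4c^-_K)>0$ into the three cases listed.

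First I would fix a standard reduced alternating diagram of $J(k,l)$: this diagram has two twist regions, one containing $2k$ crossings (coming from a vertical clasp/twist region, contributing crossings of one sign) and one containing $2l$ crossings (contributing crossings of the same or opposite sign depending on conventions), with the signs governed by the signs of $k$ and $l$ together with the orientation of the knot. The key case distinction is driven by the signs: when $k>0$ and $l>0$ the diagram is, up to mirror, positive except for a controlled number of crossings so that $c^+_K - c^-_K$ and $c^+_K + c^-_K$ work out to give $c^+_K = k+l-1$ and $c^-_K = 0$ (or the symmetric statement); when $k<0$ and $l<0$ one takes the mirror image, giving $c^+_K=0$ and $c^-_K = -(k+l)-1 = |k|+|l|-1$; and when $kl<0$ the two twist regions contribute crossings of opposite signs, yielding $c^+_K$ and $c^-_K$ expressible in terms of $k$ and $l$ separately. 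Substituting these values of $c^\pm_K$ into $(r-4c^+_K)(r+4c^-_K)>0$ gives exactly the three displayed inequalities: $r(r-4(k+l-1))>0$ in the first case, $r(r-4(k+l+1))>0$ in the second (note $-4c^-_K = 4(k+l+1)$ when $k+l<0$), and $(r+4k)(r+4l)>0$ in the mixed case.

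The main obstacle I anticipate is purely bookkeeping: pinning down the crossing numbers $c^\pm_K$ with the correct signs in each of the three regimes, since this depends delicately on the orientation conventions for $J(k,l)$ and on which mirror one calls $J(k,l)$ versus $J(-k,-l)$. One must be careful that the notion of ``number of positive/negative crossings'' is diagram-independent for these alternating knots --- which it is, by Kauffman--Murasugi--Thistlethwaite, since a reduced alternating diagram realizes the crossing number and the writhe is an invariant of alternating knots. A secondary point to verify is that when $k\neq l$ the knot $J(k,l)$ is genuinely hyperbolic and not a torus knot or a connected sum, so that Theorem~\ref{main:two-bridge} applies; the only two-bridge torus knots are the $(2,n)$ torus knots, and one checks these do not arise as $J(k,l)$ with $k\neq l$ except in degenerate small cases, which can be excluded or handled separately. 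Once the crossing counts are fixed, the translation of the inequality is immediate algebra and the corollary follows.
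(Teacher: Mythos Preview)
Your approach is essentially the same as the paper's: verify that $J(k,l)$ (with $k\neq l$) is a hyperbolic two-bridge knot with irreducible non-abelian character variety, then invoke Theorem~\ref{main:two-bridge} and read off the inequalities from the crossing counts $c^\pm_K$. The paper's own proof is a single sentence: it cites \cite{MPL} for the irreducibility of the non-abelian character variety and declares that the corollary follows, leaving the crossing-number computation entirely implicit.

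One small bookkeeping point: your description of the standard diagram as having ``$2k$ crossings'' and ``$2l$ crossings'' in the two twist regions does not match the paper's convention, where (per the figure caption) $k$ and $l$ already count half-twists, i.e.\ crossings. With $|k|+|l|$ crossings total, you will not get $c^+_K=k+l-1$, $c^-_K=0$ in the case $k,l>0$ by naively assigning one sign per region; the ``$-1$'' comes from the fact that in a reduced alternating diagram of $J(k,l)$ with $k,l$ of the same sign, the orientation forces one crossing in one of the twist regions to carry the opposite sign from the rest (equivalently, the writhe of the reduced alternating diagram is $\pm(|k|+|l|-2)$, not $\pm(|k|+|l|)$). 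You have reverse-engineered the correct values of $c^\pm_K$ from the target inequalities rather than computed them, so when you write this up you should actually carry out that orientation/sign count carefully. Once that is done, your derivation is complete and matches the paper's.
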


\begin{figure}[htpb]
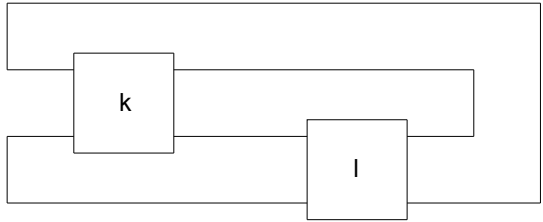

$$ \psdraw{doubletwist-cable}{3.5in} $$
\caption{The double twist knot $J(k, l)$, where $k$ and $l$ denote the numbers of half
twists in the boxes. Positive/negative numbers correspond to right-handed/left-handed twists respectively.}
\end{figure}

Let $K(m)$ denote the $(-2,3,2m+3)$-pretzel knot. It is known that $K(m)$ is a torus knot if $|m| \le 1$, and is a hyperbolic knot if $|m| \ge 2$. By applying Theorem \ref{main} we have:

\begin{thm}
\label{main:pretzel}
The AJ conjecture holds true for the $(r,2)$-cable of the hyperbolic $(-2,3,2m+3)$-pretzel knot (with $|m| \ge 2$ and $\gcd(m,3)=1$) if $r$ is an odd integer satisfying $$\begin{cases} r \big( r - (\frac{33m}{4} + \frac{3}{m} + 19) \big) > 0 &\mbox{if } m \ge 2, \\ 
r(r - 20) >0 &\mbox{if } m = -2,\\
(r  - \big(\frac{33m}{4} + \frac{6}{2m+3} + \frac{171}{8} \big))(r - 20) >0 &\mbox{if } m \le -3.
\end{cases} $$
\end{thm}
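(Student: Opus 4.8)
The proof applies Theorem~\ref{main} to $K = K(m)$. The plan is: (a) verify hypotheses (i)--(vi) for the hyperbolic $(-2,3,2m+3)$-pretzel knot $K(m)$ with $|m| \ge 2$ and $\gcd(m,3) = 1$; (b) evaluate $r^{+}_{K(m)}$ and $r^{-}_{K(m)}$; and (c) check that, in each of the ranges $m \ge 2$, $m = -2$, $m \le -3$, the condition $(r - r^{+}_{K(m)})(r - r^{-}_{K(m)}) > 0$ reduces to the displayed inequality.

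For step (a): hyperbolicity of $K(m)$ for $|m| \ge 2$ is classical, giving (i). The hypothesis $\gcd(m,3) = 1$ (equivalently $3 \nmid 2m+3$, i.e.\ $2m+3 = 6n \pm 1$ for some $n$) is precisely the range in which the $(-2,3,2m+3)$-pretzel knots were analyzed in \cite{LTaj}: for these knots the $SL_2(\BC)$-character variety of $\pi_1(X_{K(m)})$ has one abelian and one non-abelian irreducible component, which is (ii), and the same analysis establishes that $\fs(X_{K(m)})$ is reduced (iii), that $\overline{\CS(X_{K(m)})}$ is finitely generated over $\bar{D}$ (iv), and that $A_{K(m)}(L,M) \ne A_{K(m)}(-L,M)$ (v). Finally, by \cite{KT2} each $K(m)$ is mono-sloped with $b^{+}_{K(m)}(n) \le 0$ and $b^{-}_{K(m)}(n) \ge 0$, hence lies in $\CK$; moreover \cite{KT2} provides explicit periodic functions $a^{\ve}_{K(m)}, b^{\ve}_{K(m)}, c^{\ve}_{K(m)}$ for the degrees $d_{\pm}[J_{K(m)}(n)]$, and since $a^{+}_{K(m)} \ne a^{-}_{K(m)}$ the breadth $d_{+}[J_{K(m)}(n)] - d_{-}[J_{K(m)}(n)]$ has a nonzero quadratic coefficient, hence is not a linear function of $n$, which is (vi). Thus Theorem~\ref{main} applies, and the AJ conjecture holds for the $(r,2)$-cable of $K(m)$ whenever $r$ is odd with $(r - r^{+}_{K(m)})(r - r^{-}_{K(m)}) > 0$.

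For steps (b) and (c): one computes $r^{\ve}_{K(m)} = 8 a^{\ve}_{K(m)} + \ve\big(2\delta^{\ve}_{K(m),1} + \max\{0, \delta^{\ve}_{K(m),2}\}\big)$ by substituting the periodic data of \cite{KT2}, evaluated at odd $n$ only (these are the only residues entering $\delta^{\ve}_{K(m),1}$ and $\delta^{\ve}_{K(m),2}$), and carrying out the $\max$ and absolute-value operations. The three cases appear because the period of the degree quasi-polynomial degenerates when $|2m+3| = 1$ (the case $m = -2$) and because the lower-order coefficients of $d_{\pm}[J_{K(m)}(n)]$ take different forms according to the sign of $2m+3$. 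Carrying out this bookkeeping yields $\{r^{+}_{K(m)}, r^{-}_{K(m)}\} = \{0,\ \frac{33m}{4} + \frac{3}{m} + 19\}$ for $m \ge 2$; $\{r^{+}_{K(m)}, r^{-}_{K(m)}\} = \{0, 20\}$ for $m = -2$; and $\{r^{+}_{K(m)}, r^{-}_{K(m)}\} = \{20,\ \frac{33m}{4} + \frac{6}{2m+3} + \frac{171}{8}\}$ for $m \le -3$. Substituting these into $(r - r^{+}_{K(m)})(r - r^{-}_{K(m)}) > 0$ gives exactly the three displayed inequalities.

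The main obstacle is this last computation rather than the verification of the hypotheses: conditions (i)--(vi) are either classical or reduce to facts about $\pi_1(X_{K(m)})$, its character variety, and $A_{K(m)}$ already recorded in \cite{LTaj, KT2}, whereas the periodic functions $a^{\ve}_{K(m)}, b^{\ve}_{K(m)}, c^{\ve}_{K(m)}$ depend on $m$ in a way that interacts delicately with the parity restriction on $n$ and with the nested $\max/|\cdot|$ appearing in $\delta^{\ve}_{K(m),1}, \delta^{\ve}_{K(m),2}$. Keeping track of the fractional contributions (the $\frac{3}{m}$, $\frac{6}{2m+3}$ and $\frac{171}{8}$ terms) and of the case split at $m = -2$ is where the genuine work lies.
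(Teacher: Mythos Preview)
Your overall strategy matches the paper's: apply Theorem~\ref{main}, verify hypotheses (i)--(vi), and compute or bound $r^{\pm}_{K(m)}$. Two points deserve correction.

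First, condition~(v) for $K(m)$ is not established in \cite{LTaj}. The paper proves $A_{K(m)}(L,M) \ne A_{K(m)}(-L,M)$ here, by inspecting the Newton polygon of $A_{K(m)}$ (computed in \cite{Ma}, see also \cite{GM}) and observing that at least one vertex has odd $L$-coordinate, hence $A_{K(m)}$ contains a monomial $L^iM^k$ with $i$ odd. Likewise, condition~(ii) is attributed to \cite{Ma} (see also \cite{Tr-agt}), and (vi) to \cite{Ga-slope}. Your claim that (v) is in \cite{LTaj} is a genuine gap: you have not supplied an argument for it.

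Second, the paper does not compute $r^{\pm}_{K(m)}$ exactly; it obtains \emph{bounds}. For $m \ge 2$ one has $r^{-}_{K(m)} = 0$ exactly (since $d_{-}[J_{K(m)}(n)]$ is linear in $n$), but only $r^{+}_{K(m)} \le \tfrac{33m}{4} + \tfrac{3}{m} + 19$, because $d_{+}[J_{K(m)}(n)]$ involves a periodic term $-m\,r_{n-1}^2$ with $|r_n| \le \tfrac12$ whose precise values are not specified. Similarly for $m \le -3$ one gets $r^{+}_{K(m)} = 20$ exactly and $r^{-}_{K(m)} \ge \tfrac{33m}{4} + \tfrac{6}{2m+3} + \tfrac{171}{8}$. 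These one-sided bounds are enough: if $r$ lies outside the interval $[\,\text{lower bound}, \text{upper bound}\,]$ then a fortiori it lies outside $[r^{-}_{K(m)}, r^{+}_{K(m)}]$, so $(r - r^{+}_{K(m)})(r - r^{-}_{K(m)}) > 0$. Your claim of exact equality is stronger than what is needed and likely false as stated.

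Finally, the paper handles $m = -2$ differently: since $K(-2)$ is the twist knot $5_2$, a two-bridge knot, the result follows directly from Theorem~\ref{main:two-bridge} with $c^{+}_{K} = 5$, $c^{-}_{K} = 0$, rather than from a degenerate-period argument.
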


\begin{figure}[htpb]
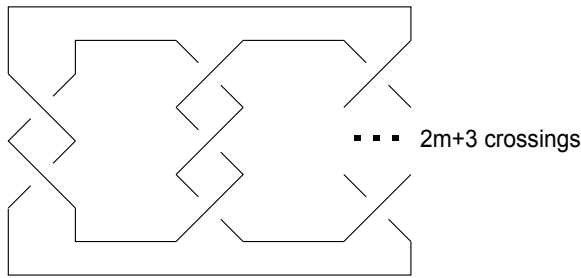

$$ \psdraw{pretzelknot-cable}{3in} $$
\caption{The $(-2,3,2m+3)$-pretzel knot.}
\end{figure}

\begin{remark}
$a)$ In \cite{LTaj}, it was proved that if a knot $K \subset S^3$ satisfies conditions $(i)-(iv)$ and condition $(vi)$ of Theorem \ref{main} then $K$ satisfies the AJ conjecture.

$b)$ Regarding Theorem \ref{main}, we \textit{conjecture} that condition $(v)$ follows from conditions $(i)$ and $(ii)$. That is, if $K \subset S^3$ is a hyperbolic knot such that the non-abelian character variety of $\pi_1(X_K)$ is irreducible, then $A_K(L,M) \not= A_K(-L,M)$. This has been shown to hold true for two-bridge knots and $(-2,3,2m+3)$-pretzel knots, see Section 6. 
\end{remark}

\subsection{Plan of the paper} In Section \ref{char_skein} we review character varieties and skein modules. In Section \ref{cable} we study the recurrence polynomial and the $\gamma$-polynomial of a knot. Similarly, in Section \ref{AC} we study the $A$-polynomial and the $C$-polynomial of a knot. In Section \ref{action} we use skein theory to relate the $\gamma$-polynomial and the $C$-polynomial. Finally, we prove the main results, namely Theorems 1--3 and Corollary 1, in Section \ref{twist}.

\section{Character varieties and skein modules} \label{char_skein}

In this section we will review character varieties and skein modules, and their relationship. We will also review the classical and quantum peripheral ideals of a knot. However, we will not review the definitions of the $A$-polynomial and the colored Jones polynomial. We refer the readers to \cite{CCGLS} and \cite{Jones, RT} for those definitions. 

\subsection{Character varieties} 

\label{char}

The set of characters of representations of a finitely generated group $G$ into $SL_2(\BC)$ is known to be a complex algebraic set, called the character variety of $G$ and denoted by $\chi(G)$ (see \cite{CS, LM}). For a manifold $Y$ we also use $\chi(Y)$ to denote $\chi(\pi_1(Y))$. Suppose  $G=\BZ^2$, the free abelian group with 2 generators.
Every pair  of generators $\mu,\lambda$ will define an isomorphism
between $\chi(G)$ and $(\BC^*)^2/\tau$, where $(\BC^*)^2$ is the
set of non-zero complex pairs $(M,L)$ and $\tau: (\BC^*)^2 \to (\BC^*)^2$ is the involution
defined by $\tau(M,L):=(M^{-1},L^{-1})$, as follows. Every representation is
conjugate to an upper diagonal one, with $M$ and $L$ being the
upper left entry of $\mu$ and $\lambda$ respectively. The
isomorphism does not change if we replace $(\mu,\lambda)$ with
$(\mu^{-1},\lambda^{-1})$.

\subsubsection{The classical peripheral ideal} Let $K$ be a knot in $S^3$ and $X_K$ its complement. The boundary of
$X_K$ is a torus whose fundamental group is $\BZ^2$. The inclusion $\partial X_K \hookrightarrow X_K$ induces the restriction
map
$$\rho : \chi(X_K) \longmapsto \chi(\partial X_K)\equiv (\BC^*)^2/\tau.$$

For a complex algebraic set $V$, let $\BC[V]$ denote the ring
of regular functions on $V$.  For example, $\BC[(\BC^*)^2/\tau]=
\ft^\sigma$, the $\sigma$-invariant subspace of  $\ft:=\BC[L^{\pm
1},M^{\pm 1}]$, where $\sigma(M^{k}L^{l})= M^{-k}L^{-l}$. The map $\rho$ above induces an algebra homomorphism
$$\theta: \BC[\chi(\partial X_K)]  \equiv \ft^\sigma \longrightarrow
\BC[\chi(X_K)].$$
We call the kernel $\fp$ of $\theta$ the classical
peripheral ideal; it is an ideal of $\ft^\sigma$. 

\subsubsection{The $B$-polynomial} The ring $\BC[\chi(X_K)]$ has a $\ft^{\sigma}$-module structure. For $f \in \ft^{\sigma}$ and $g \in \BC[\chi(X_K)]$, the action $f \cdot g$ is defined to be $\theta(f)g \in \BC[\chi(X_K)]$. Since  $\BC[M^{\pm 1}]^{\sigma}$ is a subring of $\ft^{\sigma}$, $\BC[\chi(X_K)]$ also has a $\BC[M^{\pm 1}]^{\sigma}$-module structure. Extending the map $\theta: \ft^{\sigma} \to \BC[\chi(X_K)]$ from the ground ring ${\BC[M^{\pm 1}]}^{\sigma}$ to $\BC(M)$, the fractional field of $\BC[M]$, we get
$$
\left( \bar{\ft} \overset{\bar{\theta}}{\longrightarrow} \overline{\BC[\chi(X_K)]} \right) :=\left( \ft^\sigma \overset{\theta}{\longrightarrow} \BC[\chi(X_K)] \right) \otimes _{\BC[M^{\pm 1}]^{\sigma}} \BC(M).
$$

The ring $\bar \ft=\BC(M)[L^{\pm1}]$ is a PID. The ideal $\bar{\fp}:= \ker \bar{\theta} \subset \bar{\ft}$ is thus generated by a single polynomial
$B_K\in \BZ[L,M]$ which has co-prime coefficients and is defined
up to a factor $\pm M^k$ with $k\in \BZ$. We call $B_K$ the $B$-polynomial of $K$. 

From definition, we see that $B_K(L,M)$ is the polynomial in $\bbt$ of minimal $L$-degree such that $\bbtheta(B_K) = 0$. In \cite{LTaj} it was proved that $B_K(L,M) = (L-1)A_K(L,M)$, up to a factor depending on $M$ only, where $A_K(L,M)$ is the $A$-polynomial of $K$ defined in \cite{CCGLS}.

\subsection{Skein modules}  Recall that $\CR=\BC[t^{\pm1}]$. Let $\mathcal{L}$ be the set of isotopy classes of framed links in the oriented manifold $Y$, including the empty
link. Consider the free $\CR$-module with basis $\mathcal{L}$, and
factor it by the smallest submodule containing all Kauffman expressions \cite{Ka} of
the form $\lcr-t\zer-t^{-1}\ift$ and
$\bigcirc+(t^2+t^{-2}) \emptyset$, where the links in each
expression are identical except in a ball in which they look like
depicted. This quotient is denoted by $\CS(Y)$ and is called the
Kauffman bracket skein module, or just skein module, of $Y$ (see \cite{Pr, Tu}). 

For an oriented surface $\Sigma$ we define $\CS(\Sigma):= \CS(Y)$, where $ Y= \Sigma \times [0,1]$ is the cylinder over $\Sigma$. The skein module
$\CS(\Sigma)$ has an algebra structure induced by the operation
of gluing one cylinder on top of the other. The operation of
gluing the cylinder over $\partial Y$ to $Y$ induces a
$\CS(\partial Y)$-left module structure on $\CS(Y)$. 

Let $\BT^2$ be the torus with a fixed pair $(\mu, \lambda)$ of simple closed curves intersecting at exactly one point. For co-prime integers $k$ and $l$, let $\lambda_{k,l}$ be a simple closed curve  on the torus homologically equal to $k\mu+ l\lambda$. Recall that $ \CT= \CR\la M^{\pm 1}, L^{\pm1} \ra/(LM - t^2 ML)$ is the quantum torus. Let $\sigma: \CT \to \CT$ be the involution defined by $\sigma(M^{k} L^{l}) := M^{-k} L^{-l}$.
Frohman and Gelca \cite{FG} (see also \cite{Sa}) showed that there is an algebra isomorphism $ \CS(\BT^2)\to \CT^{\sigma}$ sending $\lambda_{k,l}$ to $(-1)^{k+l} t^{kl} (M^{k}L^{l} +M^{-k}L^{-l}).$

\subsubsection{The quantum peripheral ideal} Recall that $X_K$ is the complement of $K \subset S^3$. The boundary of
$X_K$ is a torus whose skein module is $\CT^{\sigma}$. The operation of
gluing the cylinder over $\partial X_K$ to $X_K$ induces a
$\CT^\sigma$-left module structure on $\CS(X_K)$. For $\ell \in \CT^\sigma=\CS(\partial X_K)$ and $\ell' \in \CS(X_K)$, the action $\ell \cdot \ell' \in \CS(X_K)$ is the disjoint union of $\ell$ and $\ell'$. The map
$$ \Theta: \CS(\partial X_K) \equiv \CT^\sigma \to \CS(X_K), \quad \Theta(\ell) := \ell \cdot \emptyset$$
can be considered as a quantum analog of the map $\theta: \ft^{\sigma} \to \BC[\chi(X_K)]$ defined in Subsection \ref{char}. Its kernel $\CP:=\ker \Theta$ is called the quantum peripheral ideal, first introduced in \cite{FGL}. In \cite{FGL, Ge} (see also \cite{ Ga08}), it was proved that every element in $\CP$ gives rise to a recurrence relation for the colored Jones polynomial. That is $\CP \subset \CA_K$.

\subsection{Skein modules as deformations of universal character rings}

Let $\ve$ be the map reducing $t=-1$. An important result \cite{Bu,PS} in the theory of skein modules is that
$\fs(Y):=\varepsilon (\CS(Y))$ has a natural $\BC$-algebra structure and is isomorphic to the universal $SL_2(\BC)$-character algebra of $\pi_1(Y)$. See \cite{BH, PS} for a definition of the universal character algebra. The product of two links in $\fs(Y)$ is their disjoint union, which  is well-defined when $t=-1$. The isomorphism between
$\fs(Y)$ and the universal $SL_2(\BC)$-character algebra of $\pi_1(Y)$ is given by $K(r)= -\text{tr}\, r(K)$,
where $K$ is a knot in $Y$ representing an
element of $\pi_1(Y)$ and $r:\pi_1(Y) \to
SL_2(\BC)$ is a representation of $\pi_1(Y)$.
The quotient of $\fs(Y)$
 by its nilradical is canonically isomorphic to
$\BC[\chi(Y)]$, the ring of regular functions on the $SL_2(\BC)$-character
variety of $\pi_1(Y)$.

In many cases the nilradical of $\fs(Y)$ is trivial, and hence $\fs(Y)$ is exactly equal to the character ring $\BC[\chi(Y)]$. For example, this is the case when $Y$ is a torus, or when $Y$ is the
complement of a two-bridge knot/link \cite{Le06, PS, LTskein}, or when $Y$ is the
complement of the $(-2,3,2n + 1)$-pretzel knot/the $(-2,2m,2n+1)$-pretzel link \cite{LTaj, Tr-pretzel}.

\section{The recurrence polynomial and $\gamma$-polynomial} \label{cable}

In this section, we will prove some properties of the recurrence polynomial of cables of  knots in the set $\CK$. We will also define and study the $\gamma$-polynomial of a knot, a relative of the recurrence polynomial. 

\subsection{The recurrence polynomial of cable knots} 

Suppose $r$ is an odd integer. For $n>0$, by \cite[Section 2.1]{LTvol} we have
\begin{equation}
\label{cables}
J_{K^{(r,2)}}(n) = t^{-2r(n^2-1)}\sum_{k=1}^n (-1)^{r(n-k)} t^{2rk(k-1)} J_K(2k-1).
\end{equation} 

For a Laurent polynomial $f(t) \in \BC[t^{\pm 1}]$, recall that $d_+[f]$ and $d_{-} [f]$ be respectively the maximal and minimal degree of $f$ in $t^{-4}$.

\begin{lemma} 

\label{deg}

Suppose $K$ is a knot in $\CK$.

(a) If $r > r_K^+$, then for $n \gg 0$ we have
$$d_+[J_{K^{(r,2)}}(n)] = r(n^2-1)/2.$$

(b) If $r < r_K^-$, then for $n \gg 0$ we have
$$d_-[J_{K^{(r,2)}}(n)] = r(n^2-1)/2.$$
\end{lemma}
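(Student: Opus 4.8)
The plan is to analyze the $t$-degree of the summands in the cabling formula~\eqref{cables} and show that the maximal-degree term among them survives without cancellation once $r$ is large enough. Rewriting~\eqref{cables} by extracting the global prefactor, we have $J_{K^{(r,2)}}(n) = t^{-2r(n^2-1)} S_n$ where $S_n = \sum_{k=1}^n (\pm 1) t^{2rk(k-1)} J_K(2k-1)$. Switching to the variable $t^{-4}$, the $d_+$ of a monomial $t^{-2r(n^2-1)} \cdot t^{2rk(k-1)} J_K(2k-1)$ is $\tfrac12 r(n^2-1) - \tfrac12 r k(k-1) + d_+[J_K(2k-1)]$. Using that $K \in \CK$ is mono-sloped with $b_K^+(n) \le 0$, we have $d_+[J_K(2k-1)] = a_K^+ (2k-1)^2 + b_K^+(2k-1)(2k-1) + c_K^+(2k-1)$ for $k$ large, so the $k$-dependence of the exponent is governed by the quadratic $(4a_K^+ - \tfrac{r}{2})k^2 + (\text{linear in }k)$. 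First I would show that, provided $r > r_K^+ \ge 8a_K^+$, this quadratic in $k$ is strictly decreasing on the relevant range $1 \le k \le n$ for $n \gg 0$, so the maximum over $k$ is attained at $k = n$ (at least among $k$ with $2k-1 \ge N_K^+$), giving candidate maximal degree $\tfrac12 r(n^2-1) - \tfrac12 r n(n-1) + d_+[J_K(2n-1)]$.

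The second step is to confirm that this maximum equals exactly $\tfrac12 r(n^2-1)$, i.e. that $d_+[J_K(2n-1)] = \tfrac12 r n(n-1) - \tfrac12 r(n^2-1) + \tfrac12 r(n^2-1) = \tfrac12 rn(n-1)$ — wait, more carefully: we want the full $d_+$ of the $k=n$ term to be $\tfrac12 r(n^2-1)$, which forces checking that the contributions of the non-leading terms in $d_+[J_K(2k-1)]$ (the $b_K^+$ and $c_K^+$ pieces, which are only periodic, not constant) do not push some smaller-$k$ term above the $k=n$ term. This is precisely where the quantities $\delta_{K,1}^+$, $\delta_{K,2}^+$, and the threshold $r_K^+ = 8a_K^+ + 2\delta_{K,1}^+ + \max\{0,\delta_{K,2}^+\}$ enter: one must bound the maximal possible excess of a summand's degree over the $k=n$ summand's degree by comparing $b$- and $c$-values across different residues mod $2$, and show this excess is negative when $r > r_K^+$. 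I would carry out this comparison by writing the difference of exponents for indices $k$ and $n$ explicitly, grouping the $a$-terms (which give $(4a_K^+ - r/2)(k^2 - n^2) + \ldots$, strictly negative and growing like $n^2$) against the bounded $b$- and $c$-discrepancies, and invoking $b_K^+ \le 0$ to control signs.

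The third step is the no-cancellation argument: even once we know the $k=n$ term has the strictly largest $t^{-4}$-degree among all summands whose indices are large, we must ensure that (a) the finitely many small-$k$ summands (with $2k-1 < N_K^+$) also stay below, which is automatic for $n \gg 0$ since their degree is bounded while the $k=n$ term's degree grows, and (b) the leading coefficient of $J_K(2n-1)$ in $t^{-4}$ is nonzero, so no internal cancellation within the single dominant summand occurs — this is immediate from the definition of $d_+$. Since the dominant degree is achieved by exactly one summand, there is no cancellation, and $d_+[J_{K^{(r,2)}}(n)] = \tfrac12 r(n^2-1)$, proving (a). Part (b) is entirely parallel: replace $d_+$ by $d_-$, $b_K^+ \le 0$ by $b_K^- \ge 0$, the direction of the inequality $r > r_K^+$ by $r < r_K^-$, and note that making $r$ very negative makes $2rk(k-1)$-type exponents behave oppositely so that again the $k=n$ summand dominates the minimal-degree side; the bookkeeping uses $\delta_{K,1}^-$, $\delta_{K,2}^-$ and $r_K^- = 8a_K^- - (2\delta_{K,1}^- + \max\{0,\delta_{K,2}^-\})$ symmetrically.

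The main obstacle I anticipate is the careful bookkeeping in the second step: because $b_K^\ve$ and $c_K^\ve$ are only periodic rather than constant, the degree comparison between the $k=n$ summand and an arbitrary $k$-summand is not a clean single inequality but a family of inequalities indexed by residues, and one has to verify that the worst case over all residue pairs is exactly absorbed by the definition of $r_K^\ve$. Extracting the precise constants $2\delta_{K,1}^\ve$ and $\max\{0,\delta_{K,2}^\ve\}$ from this comparison — in particular correctly accounting for the parity constraint $2k-1 \equiv 1 \pmod 2$ and the sign $\ve$ in front of $(b_K^\ve(i)+b_K^\ve(j))$ in $\delta_{K,2}^\ve$ — is the delicate part; everything else is a routine estimate of a quadratic in $k$ on an interval.
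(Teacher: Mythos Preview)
Your overall strategy matches the paper's: isolate the $k$-dependent part $f(k) := -\tfrac{r}{2}k(k-1) + d_+[J_K(2k-1)]$ of the degree of each summand in \eqref{cables}, show that $f$ attains its maximum at a unique index, and read off the answer. But you have identified the wrong endpoint. You correctly observe that the leading coefficient of $f(k)$ as a quadratic in $k$ is $4a_K^+ - r/2 < 0$ when $r > r_K^+$, so $f$ is (eventually) strictly decreasing. A strictly decreasing function on $\{1,\dots,n\}$ attains its maximum at $k=1$, not at $k=n$. Your subsequent ``second step'' computation, where you try to force the $k=n$ term to have degree exactly $\tfrac12 r(n^2-1)$, cannot work: that term has degree $\tfrac12 r(n^2-1) - \tfrac12 r n(n-1) + d_+[J_K(2n-1)]$, and since $d_+[J_K(2n-1)] \sim 4a_K^+ n^2$ while $\tfrac12 r n(n-1) \sim \tfrac12 r n^2$ with $r/2 > 4a_K^+$, this is strictly smaller than $\tfrac12 r(n^2-1)$ for large $n$.

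With the correct endpoint $k=1$ the proof becomes much cleaner than your outline suggests. One shows, exactly as in the paper, that $f(k_1) < f(k_2)$ whenever $k_1 > k_2$ (this is where the constants $\delta_{K,1}^+$, $\delta_{K,2}^+$ in the definition of $r_K^+$ are used to absorb the periodic fluctuations of $b_K^+$ and $c_K^+$, together with the sign hypothesis $b_K^+ \le 0$). Once strict monotonicity is established, the unique maximizer is $k=1$, and the crucial point is simply that $f(1) = d_+[J_K(1)] = d_+[1] = 0$, giving $d_+[J_{K^{(r,2)}}(n)] = \tfrac12 r(n^2-1)$ immediately. There is no need to worry separately about the finitely many small-$k$ terms or about matching a growing quantity: the dominant term is the constant $f(1)=0$. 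Part (b) is symmetric with the roles reversed and again the dominant summand is $k=1$.
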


\begin{proof}
Fix $n \gg 0$. We prove the formula for $d_+[J_{K^{(r,2)}}(n)]$. The one for $d_-[J_{K^{(r,2)}}(n)]$ is proved similarly. By equation \eqref{cables} we have
\begin{equation}
\label{d+}
d_+[J_{K^{(r,2)}}(n)] = r(n^2-1)/2 +  d_+ \big[ \sum_{k=1}^n (-1)^{r(n-k)} t^{2rk(k-1)} J_K(2k-1) \big].
\end{equation}
In the above formula, there is a sum. Under the assumption $r > r_K^+$, we will show that there is a unique term of the sum whose highest degree is strictly greater than those of the other terms. This implies that the highest degree of the sum is exactly equal to the highest degree of that unique term. 

For $k \in \{1, 2, \dots, n\}$ let 
\begin{eqnarray*}
f(k) &:=& d_+ [(-1)^{r(n-k)} t^{2rk(k-1)} J_K(2k-1)] \\
     &=& -rk(k-1)/2 + d_+[J_K(2k-1)].
\end{eqnarray*}
Denote $a_K^+$, $b^+_K(n)$, $c^+_K(n)$, $\delta^+_{K,1}$ and $\delta^+_{K,2}$ by $\fa$, $\fb_n$, $\fc_n$, $\delta_1$ and $\delta_2$ respectively. Then 
\begin{eqnarray*}
\delta_1 &=&  \max \big\{|\fb_i - \fb_j| \, : \, i,j  \equiv 1 \pmod{2} \big\},\\
\delta_2 &=& \max \big\{\fb_i +  \fb_j + |\fc_i - \fc_j| \, : \, i,j \equiv 1 \pmod{2} \big\}
\end{eqnarray*}
and $$r^+_{K} = 8 \fa +  2 \delta_1 + \max \{0, \delta_2 \}.$$

We have
\begin{eqnarray*}
f(k)&=& -rk(k-1)/2 + \fa (2k-1)^2 + \fb_{2k-1} (2k-1) + \fc_{2k-1}\\
    &=& - (r/2 - 4\fa) k^2 + (r/2 - 4 \fa + 2 \fb_{2k-1}) k + \fa - \fb_{2k-1} + \fc_{2k-1}.
\end{eqnarray*}
We claim that if $k_1 > k_2$ then $f(k_1) < f(k_2)$. Indeed, let $\omega = f(k_1) - f(k_2)$. Then $\omega = \omega' + ( \fc_{2k_1 -1} - \fc_{2k_2 -1} )$ where
$$
\omega' := - (r/2 - 4\fa) (k_1 - k_2) (k_1 + k_2 -1)  + 2 \fb_{2k_1-1} (k_1 - k_2) + (\fb_{2k_1-1} - \fb_{{2k_2-1}}) (2k_2 -1).
$$

Since $r/2 - 4\fa > r^+_K/2 - 4\fa \ge 0$, $k_1 > k_2 \ge 1$ and $\fb_{2k_1-1} \le 0$ we have
\begin{eqnarray*}
- (r/2 - 4\fa) (k_1 - k_2) (k_1 + k_2 -1)  &\le& - (r/2 - 4\fa) (2k_2),\\
2 \fb_{2k_1-1} (k_1 - k_2) &\le& 2 \fb_{2k_1-1}.
\end{eqnarray*}
Hence 
\begin{eqnarray*}
\omega' &\le& - (r/2 - 4\fa) (2 k_2)  + 2 \fb_{2k_1-1}  + (\fb_{2k_1-1} - \fb_{{2k_2-1}}) (2k_2 -1) \\
 &=&  - \big( r/2 - 4 \fa - (\fb_{2k_1-1} - \fb_{{2k_2-1}}) \big) (2k_2)  +  (\fb_{2k_1-1} + \fb_{{2k_2-1}}).
\end{eqnarray*}

Since $r/2 - 4 \fa - (\fb_{2k_1-1} - \fb_{{2k_2-1}}) >  r^+_K/2 - 4\fa - \delta_1 = \max\{0, \delta_2/2\}$ we have
\begin{eqnarray*}
\omega' &<&  - \big( r^+_K - 8\fa - 2\delta_1 \big) +  (\fb_{2k_1-1} + \fb_{{2k_2-1}}) \\
               &\le& - \delta_2 + (\fb_{2k_1-1} + \fb_{{2k_2-1}})\\
        &\le& - |c_{2k_1-1} - c_{2k_2-1}|.  
\end{eqnarray*}
Hence $\omega = \omega' + ( \fc_{2k_1 -1} - \fc_{2k_2 -1} ) < 0$. This means that $f(k_1) < f(k_2)$ for $k_1 > k_2$. In particular, $f(k)$ attains its maximum on the set $\{1, 2, \dots, n\}$ at a unique $k=1$. Hence, equation \eqref{d+} implies that $$d_+[J_{K^{(r,2)}}(n)] = r(n^2-1)/2 + f(1).$$
Since $f(1)= d_+[J_K(1)] = 0$, the lemma follows.
\end{proof}

Let $\BJ_K(n):=J_K(2n+1)$. Then
\begin{equation}
\label{f}
M^r(L+t^{-2r}M^{-2r})J_{K^{(r,2)}} = \BJ_K,
\end{equation} 
see \cite[Section 6.1]{RZ} or \cite[Lemma 2.1]{Traj8}.

\begin{proposition}
\label{divisible}
Suppose $K$ is a knot in $\CK$. If $r$ is an odd integer with $(r-r_{K}^+)(r-r_{K}^-) > 0$, then $$\alpha_{K^{(r,2)}}=\alpha_{\BJ_K}M^r(L+t^{-2r}M^{-2r}).$$ 
\end{proposition}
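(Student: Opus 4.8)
The plan is to prove the factorization $\alpha_{K^{(r,2)}}=\alpha_{\BJ_K}\,M^r(L+t^{-2r}M^{-2r})$ by establishing two divisibilities, working inside the principal left ideal domain $\wt\CT$ where $\tilde\CA_f$ is a principal ideal generated by $\alpha_f$. First I would rewrite relation \eqref{f} in the form $P \cdot J_{K^{(r,2)}} = \BJ_K$, where $P := M^r(L+t^{-2r}M^{-2r}) \in \CT$. The key structural observation is that $P$ is, up to units in $\wt\CT$, a monic degree-one polynomial in $L$ over $\CR(M)$, hence a non-zero-divisor in $\wt\CT$; in particular left multiplication by $P$ is injective on functions, so $\CA_{J_{K^{(r,2)}}}$ and $\CA_{\BJ_K}$ are closely tied.

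The first divisibility: $\alpha_{\BJ_K}\,P$ annihilates $J_{K^{(r,2)}}$. Indeed if $Q \in \CA_{\BJ_K}$, then $Q P \cdot J_{K^{(r,2)}} = Q \cdot \BJ_K = 0$ by \eqref{f}, so $QP \in \CA_{J_{K^{(r,2)}}}$. Taking $Q = \alpha_{\BJ_K}$ (a generator of $\tilde\CA_{\BJ_K}$ in $\wt\CT$, which we may clear denominators to lie in $\CT$) shows $\alpha_{\BJ_K}P$ lies in $\tilde\CA_{J_{K^{(r,2)}}}=\wt\CT\cdot\alpha_{K^{(r,2)}}$, hence $\alpha_{K^{(r,2)}}$ divides $\alpha_{\BJ_K}P$ on the right in $\wt\CT$. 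For the reverse, I would argue that $\alpha_{K^{(r,2)}}$ is right-divisible by $P$: given any $R \in \CA_{J_{K^{(r,2)}}}$, I want to show $R = R' P$ for some $R' \in \wt\CT$ with $R' \cdot \BJ_K = 0$. This is where Lemma \ref{deg} enters. Since $P$ is monic of $L$-degree $1$ up to units, right division in $\wt\CT$ gives $R = R'P + s$ with $s \in \CR(M)$ (degree $0$ in $L$); applying both sides to $J_{K^{(r,2)}}$ yields $R' \cdot \BJ_K + s\cdot J_{K^{(r,2)}} = 0$, so $s \cdot J_{K^{(r,2)}} = -R'\cdot \BJ_K = -R' P \cdot J_{K^{(r,2)}}$. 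If $s \neq 0$ then $s(M) J_{K^{(r,2)}}(n) \equiv$ a recurrence term, forcing a relation that — combined with the explicit degree formulas $d_{\pm}[J_{K^{(r,2)}}(n)] = r(n^2-1)/2$ from Lemma \ref{deg} under the hypothesis $(r-r_K^+)(r-r_K^-)>0$ — is incompatible with the quadratic growth of the breadth; more precisely $s\cdot J_{K^{(r,2)}}$ must vanish since otherwise its degrees would be dictated purely by $J_{K^{(r,2)}}$, contradicting that it equals $-R'P\cdot J_{K^{(r,2)}}$ unless $R'P = s$ as operators, which is impossible by $L$-degree count. Hence $s = 0$, so $P$ right-divides every element of $\CA_{J_{K^{(r,2)}}}$, in particular $\alpha_{K^{(r,2)}} = \beta P$ with $\beta \cdot \BJ_K = 0$, i.e. $\beta \in \tilde\CA_{\BJ_K}$, so $\alpha_{\BJ_K}$ right-divides $\beta$.

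Combining, $\alpha_{K^{(r,2)}}$ and $\alpha_{\BJ_K}P$ divide each other in $\wt\CT$, hence are associates; normalizing coefficients in $\BC[t^{\pm1},M]$ gives the claimed equality $\alpha_{K^{(r,2)}} = \alpha_{\BJ_K}\,M^r(L+t^{-2r}M^{-2r})$. I expect the main obstacle to be the rigorous justification that the "remainder" $s\in\CR(M)$ must vanish: one has to rule out the degenerate possibility that $J_{K^{(r,2)}}$ satisfies a recurrence of lower $L$-degree that does not factor through $P$, and this genuinely requires the hypothesis $(r-r_K^+)(r-r_K^-)>0$ via Lemma \ref{deg} to control the $t$-degrees of $J_{K^{(r,2)}}(n)$ — specifically, to show that $J_{K^{(r,2)}}$ and $\BJ_K$ have "the same" recurrence complexity up to the factor $P$, so no cancellation of the leading $L$-term can occur. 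A careful bookkeeping of how the degrees $d_\pm[J_{K^{(r,2)}}(n)]$ propagate through a hypothetical lower-order recurrence, showing it would violate the non-linearity of the breadth (condition (vi)-type input, here packaged into Lemma \ref{deg}), is the technical heart of the argument.
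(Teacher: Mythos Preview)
Your overall architecture matches the paper's proof: write $\alpha_{K^{(r,2)}}=QP+s$ with $P=M^r(L+t^{-2r}M^{-2r})$ and $s\in\CR(M)$ (the paper calls it $R$), apply both sides to $J_{K^{(r,2)}}$ to obtain $Q\,\BJ_K+s\,J_{K^{(r,2)}}=0$, and then argue that $s=0$ by a degree comparison using Lemma~\ref{deg}. Once $s=0$, the identification $\alpha_{K^{(r,2)}}=\alpha_{\BJ_K}P$ is immediate from $PJ_{K^{(r,2)}}=\BJ_K$, exactly as you say.

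The gap is in your justification that $s=0$. You repeatedly invoke ``the quadratic growth of the breadth'' and ``condition (vi)-type input'', and you also float the idea that $s\,J_{K^{(r,2)}}=-R'P\,J_{K^{(r,2)}}$ would force $R'P=s$ as operators. None of this is what makes the argument work (two distinct operators can certainly agree on a single function, and the breadth of $J_K$ plays no role in this proposition). The actual mechanism is a direct comparison of the \emph{leading quadratic coefficients} of a single degree function $d_+$ (or $d_-$), not of the breadth. Concretely: if $s\neq 0$ and $r>r_K^+$, then Lemma~\ref{deg} gives
\[
d_+[\,s\,J_{K^{(r,2)}}(n)\,]=\tfrac{r}{2}\,n^2+O(n),
\]
whereas every term $e_k(M)L^k$ of $Q$ satisfies
\[
d_+[\,e_kL^k\,\BJ_K(n)\,]=d_+[J_K(2n+2k+1)]+O(n)=4a_K^+\,n^2+O(n),
\]
so $d_+[Q\,\BJ_K(n)]\le 4a_K^+\,n^2+O(n)$. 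The hypothesis $r>r_K^+\ge 8a_K^+$ gives $r/2>4a_K^+$, so for $n\gg 0$ one has $d_+[s\,J_{K^{(r,2)}}(n)]>d_+[Q\,\BJ_K(n)]$, contradicting $Q\,\BJ_K+s\,J_{K^{(r,2)}}=0$. The case $r<r_K^-$ is symmetric with $d_-$. This is the paper's argument, and it is the piece your proposal is missing; once you replace the breadth heuristic with this coefficient comparison, your plan becomes a complete proof identical to the paper's.
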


\begin{proof}
The proof is similar to that of \cite[Proposition 3.3]{Tr-twist}. We first claim that $\alpha_{K^{(r,2)}}$ is left divisible by $M^r(L+t^{-2r}M^{-2r})$. Indeed, write $$\alpha_{K^{(r,2)}}=QM^r(L+t^{-2r}M^{-2r})+R$$ where $Q \in \CR[M^{\pm 1}][L]$ and $R \in \CR[M^{\pm 1}]$. 

From equation \eqref{f} we have
\begin{eqnarray}
\label{div}
0 &=& \alpha_{K^{(r,2)}} J_{K^{(r,2)}} \nonumber\\
  &=& QM^r(L+t^{-2r}M^{-2r})J_{K^{(r,2)}}+RJ_{K^{(r,2)}}\nonumber\\
  &=& Q \BJ_K + RJ_{K^{(r,2)}}.
\end{eqnarray}

Assume that $R \not= 0$. Write $Q=\sum_{k=0}^d e_k(M) L^k$ where $e_k(M) \in \CR[M^{\pm 1}]$. Suppose $r > r_{K}^+ = 8a_{K}^+ + 2\delta_{K,1}^+ + \max\{0,  \delta_{K,2}^+ \}$. For $n \gg 0$, by Lemma \ref{deg} we have
\begin{equation}
\label{eq11}
d_+[R J_{K^{(r,2)}}(n)] = d_+[J_{K^{(r,2)}}(n)] + O(n) =rn^2/2+O(n).
\end{equation}
This implies that $R J_{K^{(r,2)}} \not=0$. Hence, by equation \eqref{div}, we have $Q \not=0$. 

Let $I = \{0 \le k \le d \mid e_k \not= 0\}$. If $k \in I$ then for $n \gg 0$ we have
\begin{equation}
\label{eq12}
d_+ [(e_k L^k \BJ_K)(n)]= d_+[J_K(2n+2k+1)]+O(n) = 4a_{K}^+ \, n^2 + O(n).
\end{equation}
Since $r/2 > r_{K}^+ /2 \ge 4a_{K}^+$, equations \eqref{eq11} and \eqref{eq12} imply that for $n \gg 0$ we have
$$d_+[RJ_{K^{(r,2)}}(n)] > \max_{k \in I} \{ d_+[(e_k L^k \BJ_K)(n)] \} \ge d_+[Q \BJ_K(n)].$$
This contradicts equation \eqref{div}. 

Similarly, if $r < r_{K}^- $ then for $n \gg 0$ we have
$$d_{-}[RJ_{K^{(r,2)}}(n)] < \min_{k \in I}  \{ d_{-} [(e_k L^k \BJ_K)(n)] \} \le d_{-} [Q \BJ_K(n)].$$ This also contradicts equation \eqref{div}. 

Hence $R=0$, which means $\alpha_{K^{(r,2)}}$ is left divisible by $M^r(L+t^{-2r}M^{-2r})$. Then, since $M^r(L+t^{-2r}M^{-2r})J_{K^{(r,2)}} = \BJ_K$,
it is easy to see that $\alpha_{K^{(r,2)}}=\alpha_{\BJ_K}M^r(L+t^{-2r}M^{-2r})$.
\end{proof}

To determine $\alpha_{K^{(r,2)}}$, we will need the following lemma.

\begin{lemma}
\label{L2}
\cite[Lemma 4.4]{Tr-twist} For $P(L,M) \in \CT$ we have
$$\left( P(L^2,M)J_K \right)(2n+1) = \left( P(L,t^2M^2)\BJ_K \right)(n).$$
\end{lemma}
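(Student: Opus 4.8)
The identity to be established is
$$\left( P(L^2,M)J_K \right)(2n+1) = \left( P(L,t^2M^2)\BJ_K \right)(n),$$
where $\BJ_K(n) = J_K(2n+1)$. Since both sides are $\CR$-linear in $P$, it suffices to verify the identity on a spanning set of $\CT$ over $\CR$, namely the monomials $P = M^a L^b$ with $a, b \in \BZ$. So the plan is to reduce to this monomial case immediately, and then carry out a direct computation on both sides using the definitions of the operators $L$ and $M$.

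First I would compute the left-hand side for $P = M^a L^b$. Since $LM = t^2 ML$ in $\CT$, one has $M^a L^b = t^{-2ab} (L^b M^a)$ as operators, but it is cleaner to keep the given ordering $M^a L^b$ and recall that $(M^a L^b f)(n) = t^{2an} f(n+b)$. Substituting $L^2$ for $L$: $P(L^2, M) = M^a L^{2b}$, so $(P(L^2,M) J_K)(n) = t^{2an} J_K(n + 2b)$, and therefore $(P(L^2,M)J_K)(2n+1) = t^{2a(2n+1)} J_K(2n+1+2b) = t^{2a(2n+1)} \BJ_K(n+b)$. Next I would compute the right-hand side. Here $P(L, t^2 M^2) = (t^2 M^2)^a L^b = t^{2a} M^{2a} L^b$, and applying this to $\BJ_K$ gives $(P(L,t^2M^2)\BJ_K)(n) = t^{2a} \cdot t^{2(2a)n} \BJ_K(n+b) = t^{2a(2n+1)} \BJ_K(n+b)$. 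The two expressions agree, which proves the identity for monomials and hence, by linearity, for all $P \in \CT$.

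There is really no serious obstacle here; the only thing to be careful about is bookkeeping with the noncommutative substitution — that is, making sure the substitution $M \mapsto t^2 M^2$ is applied consistently to each occurrence of $M$ in a monomial written in a fixed normal form, and that the $t$-powers coming from the relation $LM = t^2 ML$ are tracked correctly. One clean way to organize this is to fix the normal form $M^a L^b$ for elements of $\CT$ (using the relation to push all $M$'s to the left), note that the substitution $L \mapsto L^2$ and the substitution $(L,M) \mapsto (L, t^2M^2)$ are both well-defined ring homomorphisms on the relevant subalgebras (one checks the defining relation is preserved, e.g. $L(t^2M^2) = t^4 \cdot t^2 M^2 L = t^2 (t^2M^2) L$ wait — more precisely $L \cdot t^2M^2 = t^2 (LM) M = t^2 (t^2 ML) M = t^4 M (LM) = t^4 M (t^2 ML) = t^6 M^2 L = t^4 (t^2 M^2) L$, so the relation $(t^2M^2) L = t^{-2} L (t^2 M^2)$ would need the target relation's scalar to be $t^{-2}$... this is exactly the point to handle with care), and then the monomial computation above does the rest. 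Alternatively, and perhaps most transparently, one avoids the ring-homomorphism language entirely and simply checks the identity directly on monomials $M^a L^b$ as in the paragraph above, which is self-contained and leaves nothing to verify about normal forms. I would present the latter, monomial-by-monomial, version.
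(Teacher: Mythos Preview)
Your proof is correct. The paper does not supply its own proof of this lemma but simply cites it from \cite[Lemma 4.4]{Tr-twist}, so there is no in-paper argument to compare against; the direct monomial-by-monomial verification you carry out is exactly the natural (and standard) proof. Your computation is right: for $P=M^aL^b$ both sides equal $t^{2a(2n+1)}\BJ_K(n+b)$, and $\CR$-linearity finishes it. The digression about whether the substitutions $L\mapsto L^2$ and $M\mapsto t^2M^2$ define ring homomorphisms is unnecessary and, as written, a bit muddled; you correctly conclude that the clean route is to fix the normal form $M^aL^b$, compute both sides, and skip the homomorphism discussion entirely --- I would simply delete that paragraph from the final write-up.
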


\subsection{The $\gamma$-polynomial} 

\label{gamma}

To determine $\alpha_{\BJ_\CK}$ (and hence $\alpha_{K^{(r,2)}}$), by Lemma \ref{L2} we need to find an element $P(L,M) \in \CT$ of minimial $L$-degree such that $P(L^2,M)$ annihilates the colored Jones function $J_{K}(n)$. For this purpose we now define and study a relative of the recurrence polynomial, namely the $\gamma$-polynomial.

Let $\ell := L^2$. Let
$$ \mathcal T_2 := \mathbb \CR\langle \ell^{\pm1}, M^{\pm 1} \rangle/ (\ell M - t^4 M \ell)$$
be a subring of the quantum torus $\CT$, and $\Theta_2$ the restriction of $\Theta: \CT^{\sigma} \to \CS(X_K)$ on $\CT_2^{\sigma} \subset \CT^{\sigma}$. That is
$$ \Theta_2 = \big( \Theta \mid_{\CT_2^{\sigma}} : \CT_2^{\sigma} \to \CS(X_K) \big).$$
Let $\CP_2 : = \ker \Theta_2 \subset \CT_2$.

Recall that $\CR(M)$ is the fractional field of the polynomial ring $\CR[M]$. The ring $\CT_2^{\sigma} \subset \CT^{\sigma}$ is an $\CR[M^{\pm 1}]^{\sigma}$-algebra. Let $\tilde \CT_2 :=\CT_2^{\sigma} \otimes_{\CR[M^{\pm 1}]^{\sigma}} \CR(M)$. We have
$$\tilde{\CT}_2 =\left\{\sum_{k \in \BZ} a_k(M) \ell^k \mid a_k(M) \in \CR(M),~a_k=0\text{~almost always}\right\}$$
with commutation rule $a(M) \ell^{i} \cdot b(M) \ell^{k}=a(M)b(t^{4i}M) \ell^{i+k}$. 

It is known that $\tilde{\CT}_2$ is a left PID. The ideal extension $\tilde{\CP}_2 := \CP_2 \cdot \tilde{\CT}_2 \subset \tilde{\CT}_2$ is a principal left ideal, and thus can be generated by a polynomial $\gamma_K(\ell,M) \in \CR[\ell, M]$. We call $\gamma_K(\ell,M)$ the $\gamma$-polynomial of $K$.

From definition, we see that $\gamma_K(L^2,M)$ is an element in the quantum peripheral ideal $\CP$. Since $\CP \subset \CA_K$, we have $\gamma_K(L^2,M) \in \CA_K$. This means that $\gamma_K(L^2,M)$ annihilates the colored Jones function $J_K(n)$.

\section{The $A$-polynomial and $C$-polynomial} \label{AC} 

In this section, we will prove some properties of the $A$-polynomial of cable knots. We will also define and study the $C$-polynomial of a knot, a relative of the $A$-polynomial.

\subsection{The $A$-polynomial} A formula for the $A$-polynomial of cable knots has recently been given by Ni and Zhang, c.f. \cite{Ru}. In particular, for an odd integer $r$ we have 
\begin{equation}\label{A_cable}A_{K^{(r,2)}}(L,M)=
(L-1) \text{Res}_{\lambda} \left( A_{K}(\lambda,M^2),\lambda^2-L \right)(L+M^{-2r})
\end{equation}
where $\text{Res}_{\lambda}$ denotes the polynomial resultant eliminating the variable $\lambda$.


\begin{lemma}
\label{even_M}
Suppose $P(L,M) \in \BC[L,M]$ is irreducible and $P(L,-M)=P(L,M)$. Then $P(L,M^2) \in \BC[L,M]$ is irreducible.
\end{lemma}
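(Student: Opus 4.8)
The plan is to prove the contrapositive: if $P(L,M^2)$ factors nontrivially in $\BC[L,M]$, then $P(L,M)$ is either reducible or fails to satisfy $P(L,-M)=P(L,M)$. First I would record the basic structural fact: the substitution $M \mapsto M^2$ is an injective ring homomorphism $\BC[L,M] \to \BC[L,M]$, and its image consists precisely of those polynomials that are even in $M$, i.e. invariant under the involution $\iota\colon M \mapsto -M$ (acting trivially on $L$). So $P(L,M^2) = Q(L,M)$ for a unique $Q$, and $Q$ is $\iota$-invariant. The hypothesis $P(L,-M)=P(L,M)$ says $P$ itself lies in this even subring, so $P(L,M) = \tilde P(L,M^2)$ for some $\tilde P \in \BC[L,M]$; but one should be slightly careful — what we are really given is that $P$ is $\iota$-invariant, and we want to factor $P(L,M^2)$, so the cleanest route is to work directly with $Q(L,M) := P(L,M^2)$ and the involution $\iota$.

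Next I would use the standard dichotomy for factorizations of an $\iota$-invariant element. Suppose $Q = P(L,M^2)$ factors as a product of irreducibles in $\BC[L,M]$ (a UFD). Applying $\iota$ permutes the set of irreducible factors. Pick an irreducible factor $g$ of $Q$. Either $\iota(g)$ is associate to $g$, or it is not. In the first case, $g(L,-M) = c\, g(L,M)$ for a constant $c$ with $c^2 = 1$; comparing leading terms shows $c = \pm 1$, and if $c = -1$ then $g$ is divisible by... no — rather, $g(L,-M) = -g(L,M)$ forces every monomial of $g$ to have odd $M$-degree, so $M \mid g$, whence $g = cM$ up to units, and then $M \mid Q = P(L,M^2)$, which is impossible since $P(L,M^2)$ only has even powers of $M$ unless $M^2 \mid P(L,M^2)$, i.e. $M \mid P(L,M)$ — handle this degenerate case separately (if $M\mid P$ then by irreducibility $P = M$ up to units, and $P(L,M^2)=M^2$ is reducible, but also $P(L,-M)=-P(L,M)\neq P$ unless we exclude it; so this case doesn't arise under the hypotheses, or trivially satisfies the conclusion vacuously). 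So in the first case $g(L,-M) = g(L,M)$, meaning $g$ is itself $\iota$-invariant, hence $g = h(L,M^2)$ for some $h \in \BC[L,M]$; then $h(L,M) \mid P(L,M)$ (apply the injectivity of $M\mapsto M^2$ to the divisibility $g \mid Q$), and since $P$ is irreducible, $h$ is a unit times $P$, forcing $g$ (a unit times) $P(L,M^2) = Q$, so the factorization was trivial.

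In the second case $\iota(g)$ is not associate to $g$, so $g \cdot \iota(g)$ divides $Q$; this product is $\iota$-invariant, hence equals $h(L,M^2)$ for some $h$, and again $h \mid P$ in $\BC[L,M]$, so by irreducibility of $P$ we get $g\,\iota(g) = $ unit times $Q$, and once more the factorization of $Q$ into $g$ and $\iota(g)$ exhausts $Q$ — but this would exhibit $P(L,M^2)$ as a product of two genuine non-units, contradicting nothing yet, so here I must go back and use the hypothesis $P(L,-M) = P(L,M)$ more forcefully. The resolution: since $P$ is $\iota$-invariant, write $g \mid P$ as polynomials — wait, $g \mid Q = P(L,M^2)$, not $g \mid P$. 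The correct step is that from $g\,\iota(g) = c\,P(L,M^2) = c\,P(L,M^2)$, substituting back $M^2 \leftrightarrow$ a new variable $u$: we have $g(L,M)\,g(L,-M) = c\,P(L,M^2)$, and the left side, as a polynomial in $M^2$ (it is even in $M$), equals $c\,P(L,u)$ after $u = M^2$; this expresses $P(L,u)$ as... still a single factor. So in fact the second case shows $Q$ has exactly two irreducible factors (up to multiplicity) paired by $\iota$, and that is consistent with $P$ irreducible — so $P(L,M^2)$ genuinely CAN factor this way, and the lemma would be FALSE. Therefore I expect the real content, and the main obstacle, is that the hypothesis $P(L,-M)=P(L,M)$ must rule out precisely this "conjugate pair" case; I would examine it via the $M$-degree: $g$ and $\iota(g)$ have the same $M$-degree, and their product has $M$-degree $2\deg_M g$ which must equal $\deg_M Q = 2\deg_M P$, so $\deg_M g = \deg_M P = \deg_M Q /2$ — meaning $g$ already captures half the $M$-degree. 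The key remaining point, which is where I would concentrate the argument, is to show $g$ itself must be $\iota$-invariant after all: intuitively because $P(L,M)$ irreducible means $P(L,u)$ is irreducible in $\BC(L)[u]$ up to content, and the $M^2$-substitution, being a degree-$2$ base change ramified only along $M=0$, cannot split an irreducible unless there is a square-root obstruction, which the evenness hypothesis prevents. I would formalize this by passing to $K := \overline{\BC(L)}$, viewing $P(L,u)$ as having roots $u_1,\dots$, and checking that $M^2 - u_i$ is irreducible over $K$ for each root $u_i \neq 0$ (true since $u_i$ is not a square in a field... it is, $K$ is algebraically closed — so $M^2 - u_i = (M-\sqrt{u_i})(M+\sqrt{u_i})$, hence $Q$ DOES split over $K$); so the obstruction is purely arithmetic over $\BC(L)$, not $\overline{\BC(L)}$, and the precise mechanism is whether $u_i$ — equivalently, a full $\mathrm{Gal}$-orbit of roots of $P(L,u)$ — has its product a square in $\BC(L)$. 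I expect the honest proof uses: $\deg_u P$ and parity of the $M$-valuation of $P(L,0)$, concluding that the two $\iota$-conjugate factors case is incompatible with $P$ being $\iota$-invariant and irreducible because $P = \tilde P(L,M^2)$ would then have $\tilde P$ reducible in $\BC[L,M]$ pulling back to a factorization of $P$. In short: the main obstacle is cleanly excluding the $\iota$-conjugate-pair factorization, and I would do it by a leading-coefficient / degree-in-$M$ bookkeeping argument combined with the fact that $P$, being $\iota$-invariant and irreducible, equals $P_0(L,M^2)$ with $P_0$ irreducible, so a factor $g$ of $P_0(L,M^2)$ descends to a factor of $P_0$, forcing triviality.
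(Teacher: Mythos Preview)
Your Case 1 analysis (an irreducible factor $g$ of $Q:=P(L,M^2)$ with $g(L,-M)=g(L,M)$ descends to a factor of $P$) is correct and matches the paper. The gap is entirely in Case 2, and you recognize this: the factorization $Q=g(L,M)\,g(L,-M)$ with $g$ not associate to $g(L,-M)$ is perfectly consistent with $P$ being irreducible, and your subsequent attempts (degree counts, passing to $\overline{\BC(L)}$, writing $P=P_0(L,M^2)$) do not exclude it. In particular, the final sentence is circular: writing $P=P_0(L,M^2)$ with $P_0$ irreducible turns the goal into ``$P_0(L,M^4)$ is irreducible,'' which is not implied by irreducibility of $P_0$ alone (e.g.\ $P_0=M-L^2$ has $P_0(L,M^2)$ reducible), and the extra information you have---that $P_0(L,M^2)=P$ is irreducible---is exactly the original hypothesis, so nothing has been gained.

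The idea you are missing, and which the paper uses, is that the hypothesis $P(L,-M)=P(L,M)$ gives $Q$ a \emph{larger} symmetry than the obvious $M\mapsto -M$: one has
\[
Q(L,\sqrt{-1}\,M)=P(L,-M^2)=P(L,M^2)=Q(L,M),
\]
so $Q$ is invariant under the order-$4$ substitution $M\mapsto \sqrt{-1}\,M$. Applying this to the Case~2 factorization $Q=g(L,M)\,g(L,-M)$ and using unique factorization, $g(L,\sqrt{-1}\,M)$ must be associate to $g(L,M)$ or to $g(L,-M)$; iterating once more gives $g(L,M)=g(L,-M)$, which throws you back into Case~1 and finishes the proof (equivalently, $g=S(L,M^2)$ and $P=S^2$, contradicting irreducibility of $P$). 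So the decisive step is upgrading the $\BZ/2$-invariance of $Q$ to a $\BZ/4$-invariance; once you see this, Case~2 collapses.
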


\begin{proof}
Let $Q(L,M) := P(L,M^2).$ Since $P(L,-M)=P(L,M)$, we have
\begin{equation}
\label{eqQ}
Q(L,M\sqrt{-1})=P(L,-M^2)=P(L,M^2)=Q(L,M).
\end{equation}

Assume that $Q(L,M)$ is reducible in $\BC[L,M]$. Since $P(L,M)$ is irreducible in $\BC[L,M]$, we have $Q(L,M)=R(L,M)R(L,-M)$, for some irreducible $R(L,M) \in \BC[L,M]$. Equation \eqref{eqQ} is then equivalent to
$$R(L,M\sqrt{-1})R(L,-M\sqrt{-1}) = R(L,M)R(L,-M).$$
This implies that $R(L,M)=\pm R(L,M\sqrt{-1})$ or $R(L,M)=\pm R(L,-M\sqrt{-1})$. 

Without loss of generality, we can assume that $R(L,M)=\pm R(L,M\sqrt{-1})$. Then
$$R(L,M)=\pm R(L,M\sqrt{-1}) = R(L,(M\sqrt{-1})\sqrt{-1}) = R(L,-M).$$
This implies that $R(L,M) = S(L,M^2)$ for some $S(L,M) \in \BC[L,M]$. Hence
$$P(L,M^2) = R(L,M)R(L,-M) = (S(L,M^2))^2$$
which means $P(L,M) = (S(L,M))^2$. This contracts the assumption that $P(L,M) \in \BC[L,M]$ is irreducible. Hence $Q(L,M) \in \BC[L,M]$ is irreducible.
\end{proof}

 Let 
$$R_K(L,M):=\text{Res}_{\lambda} (A_K(\lambda,M), \lambda^2-L).$$
It is known that the $A$-polynomial satisfies the condition $A_K(L,-M)=A_K(L,M)$, see \cite{CCGLS}. Combining \cite[Lemma 4.1]{Tr-twist} and Lemma \ref{even_M}, we have the following.

\begin{proposition}
\label{A-irreducible}
Suppose that $A_K(L,M) \in \BC[L,M]$ is irreducible and $A_K(-L,M) \not= A_K(L,M)$.   Then $R_K[L,M^2]$ is irreducible in $\BC[L,M]$ and $\deg_L(R_K)=\deg_L(A_K)$.
\end{proposition}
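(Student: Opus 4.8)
The plan is to deduce Proposition \ref{A-irreducible} directly from the two inputs cited just before it: Lemma \ref{even_M} of the present paper and \cite[Lemma 4.1]{Tr-twist}. First I would recall what \cite[Lemma 4.1]{Tr-twist} provides: under the hypothesis that $A_K(L,M)$ is irreducible and $A_K(-L,M)\neq A_K(L,M)$, the polynomial $R_K(L,M)=\mathrm{Res}_\lambda(A_K(\lambda,M),\lambda^2-L)$ is irreducible in $\BC[L,M]$ and has $L$-degree equal to $\deg_L(A_K)$. (Morally, $R_K(L,M)$ is the product $A_K(\sqrt L,M)\,A_K(-\sqrt L,M)$ after clearing the square root, and the condition $A_K(-L,M)\neq A_K(L,M)$ is exactly what prevents this product from being a perfect square, while irreducibility of $A_K$ together with the Galois action $\sqrt L\mapsto -\sqrt L$ forces $R_K$ itself to be irreducible.) So after this step the only remaining task is to pass from $R_K(L,M)$ to $R_K(L,M^2)$.

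Next I would check that $R_K(L,M)$ satisfies the hypotheses of Lemma \ref{even_M}, i.e. that it is irreducible (already established) and that it is even in $M$, meaning $R_K(L,-M)=R_K(L,M)$. This evenness is inherited from the corresponding property $A_K(L,-M)=A_K(L,M)$ of the $A$-polynomial recalled from \cite{CCGLS}: since $R_K(L,M)=\mathrm{Res}_\lambda(A_K(\lambda,M),\lambda^2-L)$ and the resultant is computed from the coefficients of $A_K(\lambda,M)$ as a polynomial in $\lambda$, replacing $M$ by $-M$ leaves $A_K(\lambda,M)$ — and hence its $\lambda$-resultant against $\lambda^2-L$ — unchanged. (If one wants to be careful about the $\pm M^k$ ambiguity in the definition of $A_K$, one simply fixes the normalization so that $A_K(L,-M)=A_K(L,M)$ holds on the nose, which \cite{CCGLS} guarantees is possible.) With both hypotheses verified, Lemma \ref{even_M} applies to $P=R_K$ and yields that $R_K(L,M^2)$ is irreducible in $\BC[L,M]$.

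Finally I would record the degree statement: the substitution $M\mapsto M^2$ does not affect the $L$-variable, so $\deg_L(R_K(L,M^2))=\deg_L(R_K(L,M))=\deg_L(A_K)$ by the degree part of \cite[Lemma 4.1]{Tr-twist}. This completes the proof, since the proposition's conclusion is precisely ``$R_K[L,M^2]$ is irreducible in $\BC[L,M]$ and $\deg_L(R_K)=\deg_L(A_K)$''.

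The proof is essentially a two-line assembly of cited results, so there is no serious mathematical obstacle; the only point requiring a moment's care is the bookkeeping around even-ness in $M$ and the normalization ambiguity of $A_K$, which must be pinned down to legitimately invoke Lemma \ref{even_M}. A secondary subtlety worth a sentence is that Lemma \ref{even_M} requires its input $P$ to be \emph{irreducible}, not merely square-free, so it is genuinely important that \cite[Lemma 4.1]{Tr-twist} delivers irreducibility of $R_K$ and not just that $R_K$ has no repeated factors — and this in turn is where the hypothesis $A_K(-L,M)\neq A_K(L,M)$ does its work.
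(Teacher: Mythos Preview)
Your proposal is correct and follows exactly the paper's approach: the paper's proof is the single sentence ``Combining \cite[Lemma 4.1]{Tr-twist} and Lemma \ref{even_M}, we have the following,'' preceded by the remark that $A_K(L,-M)=A_K(L,M)$ from \cite{CCGLS}. Your write-up simply unpacks this combination, making explicit the step that $R_K$ inherits the evenness in $M$ from $A_K$ so that Lemma \ref{even_M} applies.
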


\subsection{The $C$-polynomial} In this subsection we define and study a relative of the $A$-polynomial, namely the $C$-polynomial. The $C$-polynomial $C_K(\ell, M)$ can be considered as a classical analog of the $\gamma$-polynomial $\gamma_K(\ell, M)$ defined in Subsection \ref{gamma}.

For simplicity, we consider the knots $K$ whose universal character ring $\fs(X_K)$ is reduced. This means that $\fs(X_K) = \BC[\chi(X_K)]$. Recall that $\ell = L^2$. Let $\ft_2 := \BC[\ell^{\pm1}, M^{\pm 1}] \subset \ft$ and $\theta_2$ the restriction of $\theta: \ft^{\sigma} \to \fs(X_K)$ on $\ft_2^{\sigma} \subset \ft^{\sigma}$. That is
$$ \theta_2 = \big( \theta \mid_{\ft_2^{\sigma}} : \ft_2^{\sigma} \to \fs(X_K) \big).$$

Recall that $\fs(X_K) = \BC[\chi(X_K)]$ has a $\ft^{\sigma}$-module structure and hence a $\BC[M^{\pm 1}]^{\sigma}$-module structure, since $\BC[M^{\pm 1}]^{\sigma}$ is a subring of $\ft^{\sigma}$. Extending the map $\theta_2: \ft_2^{\sigma} \to \fs(X_K)$ from the ground ring $\BC[M^{\pm 1}]^\sigma$ to $\BC(M)$ we get
$$
\left( \bar{\ft}_2 \overset{\bar{\theta}_2}{\longrightarrow} \overline{\fs(X_K)} \right) :=\left( \ft_2^\sigma \overset{\theta_2}{\longrightarrow} \fs(X_K) \right) \otimes _{\BC[M^{\pm 1}]^\sigma} \BC(M).
$$
The ring $\bar \ft_2=\BC(M)[\ell^{\pm1}]$ is a PID. The ideal $\bar{\fp}_2:= \ker \bar{\theta}_2 \subset \bar{\ft}_2$ can thus be generated by a  polynomial $C_K(\ell,M) \in \BC[\ell, M]$. We call $C_K(\ell,M)$ the $C$-polynomial of $K$.

We now relate the $C$-polynomial to the $A$-polynomial. 

\begin{lemma}
\label{CR}
We have $$C_{K}(\ell,M) = (\ell-1) R_K(\ell,M)$$ up to a factor depending on $M$ only.
\end{lemma}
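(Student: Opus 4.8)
The plan is to mimic the relationship $B_K(L,M)=(L-1)A_K(L,M)$ established in \cite{LTaj}, but now at the level of the $\ell=L^2$ subring. Recall that $B_K$ is defined as the generator of $\bar{\fp}=\ker\bbtheta\subset\bbt$, while $C_K$ is the generator of $\bar{\fp}_2=\ker\bar\theta_2\subset\bar\ft_2$. Since $\theta_2$ is literally the restriction of $\theta$ to the subalgebra $\ft_2^\sigma\subset\ft^\sigma$, after localizing we have $\bar\ft_2=\BC(M)[\ell^{\pm1}]=\BC(M)[L^{\pm2}]\subset\bbt=\BC(M)[L^{\pm1}]$, and $\bar\theta_2=\bbtheta|_{\bar\ft_2}$. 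Hence $\bar\fp_2=\bar\fp\cap\bar\ft_2$. So the whole lemma reduces to a purely algebraic statement: if $D$ is a PID (here $\bbt$), $E\subset D$ is a subring which is again a PID with $D$ free or at least nicely behaved over $E$ (here $E=\bar\ft_2$, and $D$ is a free $E$-module of rank $2$ with basis $\{1,L\}$), and $\bar\fp=(B_K)$ is a principal ideal of $D$, then $\bar\fp\cap E$ is principal in $E$ and I must identify its generator in terms of $B_K=(L-1)A_K(L,M)$.

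The key computation is to intersect the ideal $((L-1)A_K(L,M))$ of $\BC(M)[L^{\pm1}]$ with $\BC(M)[L^{\pm2}]$. First I would record the elementary fact that a Laurent polynomial $g(L)\in\BC(M)[L^{\pm1}]$ lies in $\BC(M)[L^{\pm2}]$ iff $g(L)=g_0(L^2)$, equivalently iff $g(-L)=g(L)$. Now $f\in\bar\fp\cap\bar\ft_2$ means $f(L)=(L-1)A_K(L,M)h(L)$ for some $h\in\BC(M)[L^{\pm1}]$ and $f(-L)=f(L)$. Using $A_K(-L,M)=A_K(L,M)$ (this symmetry in $L$ must be justified — note that $A_K$ is even in $M$ by \cite{CCGLS}, but evenness in $L$ is \emph{not} generally true; rather one should work with $R_K(L,M)=\mathrm{Res}_\lambda(A_K(\lambda,M),\lambda^2-L)$, which by construction is the ``even part'' and satisfies $R_K(L^2,M)=A_K(L,M)A_K(-L,M)$ up to units). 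The clean approach: observe $(\ell-1)R_K(\ell,M)$, when pushed into $\bbt$ via $\ell=L^2$, becomes $(L^2-1)R_K(L^2,M)=(L-1)(L+1)A_K(L,M)A_K(-L,M)$, which is divisible by $(L-1)A_K(L,M)=B_K$, hence lies in $\bar\fp$; and it is manifestly even in $L$, hence lies in $\bar\fp\cap\bar\ft_2=\bar\fp_2$. This shows $C_K(\ell,M)\mid (\ell-1)R_K(\ell,M)$.

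For the reverse divisibility I would argue by minimality of $L$-degree. Write $C_K(\ell,M)=c(\ell)$, so $c(L^2)\in\bar\fp$, i.e. $c(L^2)=(L-1)A_K(L,M)h(L)$. Taking $L\mapsto -L$ and using that $c(L^2)$ is even gives $(L-1)A_K(L,M)h(L)=(-L-1)A_K(-L,M)h(-L)$. Since $\BC(M)[L^{\pm1}]$ is a UFD, comparing factorizations forces $A_K(-L,M)$ (the ``odd-twisted'' companion) to divide $(L-1)A_K(L,M)h(L)$; as $\gcd(A_K(-L,M),(L-1)A_K(L,M))$ relates to $R_K$, one deduces that $A_K(L,M)A_K(-L,M)=R_K(L^2,M)$ divides $c(L^2)/(L+1)$ or similar, and that $(L^2-1)R_K(L^2,M)$ divides $c(L^2)$; translating back, $(\ell-1)R_K(\ell,M)\mid C_K(\ell,M)$. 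Combined with the first divisibility and the fact that both are defined up to a factor in $M$, this yields $C_K(\ell,M)=(\ell-1)R_K(\ell,M)$ up to a factor depending on $M$ only.

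The main obstacle I anticipate is handling the parity bookkeeping correctly: $A_K$ itself is \emph{not} even in $L$, so the naive claim ``$(L-1)A_K(L,M)$ is already even'' is false, and one must be careful to work with the symmetrized object $R_K(L^2,M)=A_K(L,M)A_K(-L,M)$ (up to a unit and up to the leading/trailing coefficients that $\mathrm{Res}$ introduces), and to track the $(L+1)$ versus $(L-1)$ factors and the possible contribution of $A_K(-L,M)/A_K(L,M)$ when $A_K$ happens to share factors with its $L$-negation — exactly the degenerate situation excluded later by condition $(v)$, $A_K(-L,M)\neq A_K(L,M)$, though for this lemma we should not need that hypothesis and should instead let such common factors be absorbed into the ``factor depending on $M$''. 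A secondary technical point is confirming that $\bar\fp_2=\bar\fp\cap\bar\ft_2$, which follows from $\theta_2$ being a restriction and from the tensor product with $\BC(M)$ being exact (flat base change), so $\ker\bar\theta_2=\ker\bbtheta\cap\bar\ft_2$ — this deserves an explicit sentence but is routine.
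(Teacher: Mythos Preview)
Your approach is sound and is essentially an explicit unpacking of what the paper does in three lines via resultants. The paper simply asserts that $C_K(\ell,M)=\mathrm{Res}_L\bigl(B_K(L,M),\,L^2-\ell\bigr)$ (elimination of $L$ from the generator of $\bar\fp$ yields the generator of the contraction $\bar\fp\cap\bar\ft_2=\bar\fp_2$), and then uses $B_K=(L-1)A_K$ and multiplicativity of the resultant to factor this as $(\ell-1)\,\mathrm{Res}_L(A_K(L,M),L^2-\ell)=(\ell-1)R_K(\ell,M)$. Your identity $R_K(L^2,M)=A_K(L,M)\,A_K(-L,M)$ up to units is precisely the content of that resultant step, so your forward divisibility $C_K\mid(\ell-1)R_K$ reproduces half of the paper's argument; for the reverse direction the resultant formalism spares you the UFD bookkeeping you left as a sketch.

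One point to correct: your remark that common factors of $A_K(L,M)$ and $A_K(-L,M)$ could be ``absorbed into the factor depending on $M$'' is wrong. Any such common factor is even in $L$, hence a genuine polynomial in $\ell$, not merely in $M$; in that degenerate case the resultant overcounts multiplicities and $C_K$ is a proper divisor of $(\ell-1)R_K$ in $\ell$. Neither your sketch nor the paper's proof addresses this, but the paper only ever applies the lemma under hypothesis $(v)$ of Theorem~\ref{main}, namely $A_K(L,M)\neq A_K(-L,M)$, which excludes the problematic case.
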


\begin{proof}
From definition, we see that $C_{K}(\ell,M)$ is the polynomial in $\bbt_2$ of minimal $\ell$-degree such that $\bbtheta_2(C_K(\ell,M)) = 0$. Recall from Subsection \ref{char} that $B_K(L,M)$ is the polynomial in $\bbt$ of minimal $L$-degree such that $\bbtheta(B_K(L,M)) = 0$. 

Since $\ell = L^2$,  we have
\begin{eqnarray*}
C_{K}(\ell,M) &=& \text{Res}_L (B_{K}(L,M), L^2-\ell) \\
              &=&  (\ell-1) \text{Res}_L (A_{K}(L, M), L^2-\ell) \\
              &=& (\ell-1) R_K(\ell,M)
\end{eqnarray*}
up to a factor depending on $M$ only. 
\end{proof}

\begin{proposition}
\label{surj-2}
Suppose $K \subset S^3$ is a hyperbolic knot such that the universal character ring $\fs(X_K)$ is reduced, the non-abelian character variety of $\pi_1(X_K)$ is irreducible, and $A_K(L,M) \not= A_K(-L,M)$. Then the map $\bbtheta_2: \bar{\ft}_2 \to \overline{\fs(X_K)}$ is surjective.
\end{proposition}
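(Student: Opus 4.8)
The plan is to compare the map $\bar\theta_2$ with the known-surjective map $\bar\theta\colon\bar\ft\to\overline{\fs(X_K)}$. Under the stated hypotheses, $\overline{\fs(X_K)}=\overline{\BC[\chi(X_K)]}$ and, by the result quoted in Subsection~\ref{char} (namely $B_K=(L-1)A_K$ up to a factor in $M$), the image of $\bar\theta$ is generated over $\BC(M)$ by the classes of $1,L,L^2,\dots,L^{d-1}$ where $d=\deg_L B_K=1+\deg_L A_K$; indeed $\bar\ft/\bar\fp\cong\BC(M)[L]/(B_K)$ is a $\BC(M)$-vector space of dimension $d$ with basis the images of $1,L,\dots,L^{d-1}$. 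So $\overline{\fs(X_K)}$ is spanned over $\BC(M)$ by $\bbtheta(1),\bbtheta(L),\dots,\bbtheta(L^{d-1})$.

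First I would record that surjectivity of $\bar\theta_2$ is equivalent to showing that the class $\bbtheta(L)$ lies in the image of $\bbtheta_2$, since the image of $\bbtheta_2$ is a $\BC(M)$-subalgebra containing $\bbtheta_2(\ell)=\bbtheta(L^2)$, and once $\bbtheta(L)$ is in the image, so is $\bbtheta(L^k)=\bbtheta(L)^{\,k-2}\bbtheta(L^2)$ for all $k\ge 2$ (and $\bbtheta(1),\bbtheta(L)$ directly), whence the whole spanning set above is hit. Thus the task reduces to expressing $\bbtheta(L)$ as a $\BC(M)$-linear combination of $\bbtheta(L^{2j})$ for $j\ge 0$.

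To do that I would use the $C$-polynomial: by definition $C_K(\ell,M)=\bbtheta_2$-annihilates, i.e. $\sum_j c_j(M)\,\bbtheta(L^{2j})=0$ where $C_K(\ell,M)=\sum_j c_j(M)\ell^j$, and by Lemma~\ref{CR} together with Proposition~\ref{A-irreducible} we have $C_K(\ell,M)=(\ell-1)R_K(\ell,M)$ with $R_K(\ell,M^2)$ irreducible and $\deg_\ell R_K=\deg_\ell R_K=\deg_L A_K=d-1$, so $\deg_\ell C_K=d$. Hence $\bbtheta_2(\bar\ft_2)$ is a $\BC(M)$-vector space of dimension $d$, the same dimension as $\overline{\fs(X_K)}$. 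Since $\bbtheta_2$ is the restriction of $\bbtheta$, its image is a subspace of $\overline{\fs(X_K)}$ of the same finite dimension, and therefore equals it. Concretely: $\overline{\fs(X_K)}=\bar\ft/\bar\fp$ has dimension $d$ over $\BC(M)$; the subalgebra $\bbtheta_2(\bar\ft_2)=\bar\ft_2/\bar\fp_2$ has dimension $\deg_\ell C_K=d$; a $d$-dimensional subspace of a $d$-dimensional space is everything.

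The one point needing care is the dimension count for $\overline{\fs(X_K)}$: I would justify $\dim_{\BC(M)}\overline{\fs(X_K)}=d=1+\deg_L A_K$ from irreducibility of the non-abelian character variety (so $\chi(X_K)$ has exactly two components, abelian and non-abelian, matching the factorization $B_K=(L-1)A_K$ with each factor irreducible over $\BC(M)$ by the quoted results and Proposition~\ref{A-irreducible}-type irreducibility of $A_K$), together with the fact that $\bar\ft/\bar\fp\cong\BC(M)[L]/(B_K(L,M))$ because $B_K$ generates the principal ideal $\bar\fp$ in the PID $\bar\ft$. The main obstacle is making sure that passing to the localization $\otimes_{\BC[M^{\pm1}]^\sigma}\BC(M)$ genuinely turns $\fs(X_K)$ into a finite-dimensional $\BC(M)$-algebra of the expected dimension — i.e. that no component of $\chi(X_K)$ other than the two accounted for survives localization and that $B_K$ (not a proper divisor) really generates $\bar\fp$; this is exactly where hyperbolicity, reducedness of $\fs(X_K)$, irreducibility of the non-abelian component, and the condition $A_K(L,M)\neq A_K(-L,M)$ all get used. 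Once the two dimensions are both shown to equal $d$, surjectivity is immediate.
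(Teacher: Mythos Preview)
Your argument is correct and is essentially the same dimension-counting proof the paper gives: one shows (citing \cite{LTaj}) that $\bar\theta$ is surjective with $\dim_{\BC(M)}\overline{\fs(X_K)}=\deg_L B_K$, then uses Lemma~\ref{CR} together with $\deg_\ell R_K=\deg_L A_K$ (which is where $A_K(L,M)\neq A_K(-L,M)$ enters) to get $\deg_\ell C_K=\deg_L B_K$, and concludes by equality of dimensions. Your second paragraph, reducing the problem to showing $\bar\theta(L)\in\operatorname{Im}\bar\theta_2$, is a harmless detour that you never actually use---the dimension count in your third paragraph already finishes the proof on its own.
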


\begin{proof}
By assumption $K$ is hyperbolic, the universal character ring $\fs(X_K)$ is reduced and the nonabelian character variety of $\pi_1(X_K)$ is irreducible. The proof of \cite[Theorem 1]{LTaj} then implies that the map $\bbtheta: \bbt \to \overline{\fs(X_K)}$ in Subsection \ref{char} is surjective. Moreover, the $B$-polynomial $B_K(L,M)$ has $L$-degree equal to the dimension of the $\BC(M)$-vector space $\overline{\fs(X_K)}$. That is $\deg_L(B_K(L,M)) = \dim_{\BC(M)}\overline{\fs(X_K)}$.

Since the non-abelian character variety of $\pi_1(X_K)$ is irreducible, the $A$-polynomial $A_K(L,M) \in \BC[L,M]$ is irreducible. Since $A_K(L,M) \not= A_K(-L,M)$, \cite[Lemma 4.1]{Tr-twist} implies that 
$
\deg_{\ell}(R_K(\ell,M)) = \deg_L(A_K(L,M)). 
$
Hence, by Lemma \ref{CR} we have 
\begin{eqnarray*}
\deg_{\ell}(C_K(\ell,M)) &=& \deg_{\ell}(R_K(\ell,M)) +1 \\
&=& \deg_L(B_K(L,M)). 
\end{eqnarray*}
This implies that $\deg_{\ell}(C_K(\ell,M)) = \dim_{\BC(M)}\overline{\fs(X_K)}$ and thus $\bbtheta_2: \bar{\ft}_2 \to \overline{\fs(X_K)}$ is surjective.
\end{proof}

\section{Localized skein modules and the AJ conjecture} \label{action}

In this section, we first recall the definition of the localized skein module $\overline{\CS(X_K)}$. We then show that under an assumption on $\overline{\CS(X_K)}$ and an extra condition, the $\gamma$-polynomial and $C$-polynomial of $K$ are related by $\ve(\gamma_K(\ell, M)) \mid C_K(\ell, M)$. See Proposition \ref{surj}.

\subsection{Localization} 

Let $D:=\CR [M^{\pm 1}]$ and $\ov{D}$ be its localization at $(1+t)$:
$$\ov{D}:=\left\{\frac{f}{g} \mid f,g \in D, \, g \not \in (1+t)   D     
\right\}
,$$
which is a discrete valuation ring and  is flat over $D$. The ring $D=\CR [M^{\pm 1}]$ is flat over $D^\sigma=\CR [M^{\pm 1}]^\sigma$. Hence $\bar{D}$ is flat over $D^\sigma$.

The ring $\CT_2^{\sigma} \subset \CT^{\sigma}$ is a $D^\sigma$-algebra. Let
$ \ov{\CT}_2 :=\CT_2^{\sigma} \otimes_{D^\sigma} \ov{D}$. We have
$$\ov{\CT}_2 =\left\{\sum_{k \in \BZ} a_k(M) \ell^k \mid a_k(M) \in \ov{D},~a_k=0\text{~almost always}\right\}$$
with commutation rule $a(M) \ell^{i} \cdot b(M) \ell^{k}=a(M)b(t^{4i}M) \ell^{i+k}$.

Let $\bT_{2,+}$ be the subring of $\bT_2$ consisting of all polynomials in $\ell$, i.e. polynomials like the above  with $a_k(M)=0$ if $k <0$. Although the ring $\bT_2$ is not a left PID, we have the following description of its left ideals. 

\begin{proposition} 

\label{lem.ideal}

 Suppose $I\subset \bT_2$ is a non-zero left ideal.
There are $h_0,\dots, h_{m-1} \in \bT_{2,+}$ with leading coefficients 1 and $\gamma \in \bT_{2,+}$ such that
$I$ is generated by $\{ h_0 \gamma, (1+t) h_{1} \, \gamma, \dots, (1+t)^{m-1} h_{m-1}  \gamma, (1+t)^m \gamma\}$.
Besides, $1\le \deg_{\ell}(h_{k+1}) \le  \deg_{\ell}(h_k)$ for $k= 0,\dots, m-2$; and $\gamma$ is the generator of the principal left ideal $\tilde I=I \cdot \tilde \CT_2 \subset \tilde \CT_2$.

\end{proposition}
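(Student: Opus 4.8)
The plan is to mimic the standard structure-theory argument for left ideals over $\bar{D}\langle \ell^{\pm 1}\rangle$-type rings (the analogue for $\bar{D}\langle L^{\pm 1}\rangle$ appears in \cite{LTaj}), adapted to the present ring $\bar{\CT}_2$ where $\ell M = t^4 M\ell$. First I would pass to the principal left ideal $\tilde I = I\cdot\tilde\CT_2$ inside the PID $\tilde\CT_2$; since $\tilde\CT_2$ is a left principal ideal domain, $\tilde I = \tilde\CT_2\cdot\gamma$ for some $\gamma$. Clearing denominators and using that $\bar D$ is a discrete valuation ring with uniformizer $1+t$, I can choose $\gamma\in\bT_{2,+}$ (a polynomial in $\ell$) with leading coefficient a unit, hence after normalizing, leading coefficient $1$; this $\gamma$ is the last assertion of the proposition. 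The containment $I\cdot\gamma^{-1}\subset\tilde\CT_2$ lets me replace $I$ by $J:= \{P\in\bT_2 : P\gamma \in I\}$... more carefully, I would show $I = J\gamma$ where $J\subset \bT_2$ is a left ideal with $\tilde J = \tilde\CT_2$ (i.e. $J$ contains a unit of $\tilde\CT_2$), so it suffices to prove the statement for such $J$, which is the "$\gamma=1$" case. This reduction is the conceptual core.

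So I reduce to: if $J\subset\bT_2$ is a left ideal with $\tilde J=\tilde\CT_2$, then $J$ is generated by $\{h_0, (1+t)h_1,\dots,(1+t)^{m-1}h_{m-1},(1+t)^m\}$ with the stated degree inequalities. Here I would run an induction/filtration by powers of $1+t$. Let $m$ be minimal such that $(1+t)^m\in J$ — this exists because $\tilde J=\tilde\CT_2$ means $J$ contains an element whose leading coefficient, after clearing, is a power of $1+t$ times a unit, and $(1+t)$ is central. For each $i=0,\dots,m-1$, look at $J_i := \{$ leading-degree-reduced data of elements of $J$ whose coefficients lie in $(1+t)^i\bar D$ but not all in $(1+t)^{i+1}\bar D\}$; more precisely consider the image of $J$ in $\bT_{2,+}/(1+t)\bT_{2,+} \cong \kappa[\ell]$ (where $\kappa = \bar D/(1+t)\bar D$ is the residue field), after dividing by $(1+t)^i$. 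Each such image is an ideal of the PID $\kappa[\ell]$, hence principal, generated by the reduction of some monic $h_i\in\bT_{2,+}$; lift to get $(1+t)^i h_i\in J$. The degree monotonicity $\deg_\ell h_{i+1}\le\deg_\ell h_i$ follows because multiplying an element of "level $i$" by $(1+t)$ gives one of "level $i+1$", so the level-$(i+1)$ ideal of $\kappa[\ell]$ contains the level-$i$ one; and $\deg_\ell h_{k+1}\ge 1$ follows from minimality of $m$ (if some $h_{k+1}$ were a unit we would get $(1+t)^{k+1}\in J$, contradicting minimality unless $k+1=m$). That these $m+1$ elements generate $J$ is a routine division argument: given $P\in J$, subtract off $\bar D$-multiples and $\bT_{2,+}$-left-multiples of the generators to successively kill the top-level, then lower-level, contributions, using the Euclidean algorithm in $\kappa[\ell]$ at each level and descending on the $(1+t)$-adic valuation.

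I expect the main obstacle to be the reduction step $I = J\gamma$ and the bookkeeping in the generation argument, because $\bT_2$ is noncommutative: right-multiplication by $\gamma$ does not obviously send left ideals to left ideals, so I have to be careful that $\tilde I = \tilde\CT_2\gamma$ together with $\gamma\in\bT_{2,+}$ monic actually forces $I\subset\bT_2\cdot\gamma$ (one needs a left-division-with-remainder in $\bT_{2,+}$ by a monic polynomial, which does hold since the leading coefficient $1$ is a unit and one can divide degree-by-degree despite the $t^4$-twist), and then that $J=I\gamma^{-1}$ is genuinely a left ideal of $\bT_2$ — this uses that $\gamma$ is a non-zero-divisor and that $\bT_2\gamma \subset$ ... i.e. that for $a(M)\ell^k$ one has $a(M)\ell^k\cdot\gamma = (\text{something in }\bT_2)\cdot$, which is automatic, together with the reverse inclusion coming from $\tilde I=\tilde\CT_2\gamma$. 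The degree inequality $\deg_\ell h_0 \ge \deg_\ell h_1$ and the genuinely combinatorial part (that the listed set generates, not just that it is contained in $I$) will require the most care; everything else is the expected valuation-filtration plus $\kappa[\ell]$-is-a-PID machinery, parallel to \cite{LTaj}.
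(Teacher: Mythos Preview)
Your proposal is correct and follows exactly the approach of \cite[Proposition 3.3]{LTaj}, which is precisely what the paper invokes: the paper's own proof is simply the one-line statement that the argument is ``completely similar to that of \cite[Proposition 3.3]{LTaj} and hence is omitted.'' Your sketch in fact supplies more detail than the paper does --- the reduction to $\gamma=1$ via monic left division in $\bT_{2,+}$, the $(1+t)$-adic filtration with residue ring $\kappa[\ell]\cong\BC(M)[\ell]$, and the Euclidean generation argument are exactly the ingredients of the Le--Tran proof, and the only change from their setting is the harmless replacement of $L$ by $\ell=L^2$ (commutation rule $\ell M = t^4 M\ell$ instead of $LM=t^2ML$), which affects nothing structural.
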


\begin{proof}
The proof is completely similar to that of \cite[Proposition 3.3]{LTaj} and hence is omitted.
\end{proof}

\def\IM{\mathrm{Im}}

\subsection{Localized skein modules}

The skein module $\CS(X_K)$ has a $\CT^{\sigma}$-module structure, hence a $D^\sigma$-module structure since $D^\sigma$ is a subring of $\CT^{\sigma}$. Extending the map $\Theta_2: \CT_2^{\sigma} \to \CS(X_K)$ in the Subsection \ref{gamma} from the ground ring $D^\sigma$ to $\bar{D}$, we get
$$
\left( \bar{\CT}_2 \overset{\bar{\Theta}_2}{\longrightarrow} \overline{\CS(X_K)} \right) :=\left( \CT_2^\sigma \overset{\Theta_2}{\longrightarrow} \CS(X_K) \right) \otimes _{D^\sigma} \bar{D}.
$$
In \cite{LTaj}, $\overline{\CS(X_K)}$ is called the localized skein module of $X_K$. It is a module over $\bD$.

\begin{lemma}\label{lem.surj}
Suppose $\bbtheta_2$ is surjective and $\overline{\CS(X_K)}$ is finitely generated over $\bD$. Then $\bTheta_2$ is surjective.
\end{lemma}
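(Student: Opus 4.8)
The plan is to exploit the classical-to-quantum reduction map $\ve$ (setting $t=-1$) together with a Nakayama-type lifting argument over the discrete valuation ring $\bD$, whose maximal ideal is $(1+t)$. First I would observe that applying $\ve$ to the map $\bTheta_2 : \bT_2 \to \overline{\CS(X_K)}$ recovers, up to the identification $\fs(X_K) \cong \BC[\chi(X_K)]$, the classical map $\bbtheta_2 : \bar{\ft}_2 \to \overline{\fs(X_K)}$ from Subsection on the $C$-polynomial; concretely, $\overline{\CS(X_K)} \otimes_{\bD} (\bD/(1+t)) \cong \overline{\fs(X_K)}$ as $\bar{\ft}_2$-modules, and the induced map is $\bbtheta_2$. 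This uses the result of \cite{Bu,PS} that $\ve(\CS(X_K)) = \fs(X_K)$, the reducedness assumption baked into the definition of the $C$-polynomial, and flatness of $\bD$ over $D^\sigma$ (stated in the Localization subsection) so that tensoring commutes with the construction of $\overline{\CS(X_K)}$.

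Granting that identification, the argument runs as follows. Let $N := \overline{\CS(X_K)}$, a finitely generated $\bD$-module by hypothesis, and let $N' := \IM(\bTheta_2) \subseteq N$. I want to show $N' = N$. Reducing mod $(1+t)$, the image of $N'$ in $N/(1+t)N \cong \overline{\fs(X_K)}$ is exactly $\IM(\bbtheta_2)$, which is all of $\overline{\fs(X_K)}$ by the surjectivity hypothesis on $\bbtheta_2$. Hence $N' + (1+t)N = N$. Since $\bD$ is a discrete valuation ring — in particular local Noetherian with maximal ideal $(1+t)$ — and $N$ is finitely generated, Nakayama's lemma applied to the finitely generated module $N/N'$ (for which $(1+t)(N/N') = N/N'$) forces $N/N' = 0$, i.e.\ $N' = N$. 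Therefore $\bTheta_2$ is surjective.

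The one point that needs genuine care — and which I expect to be the main obstacle — is the base-change identification $\overline{\CS(X_K)} \otimes_{\bD} (\bD/(1+t)) \cong \overline{\fs(X_K)}$ compatible with the module structures and with $\bTheta_2 \mapsto \bbtheta_2$. This requires being precise about the order of the two operations involved (localizing at $(1+t)$ via $\otimes_{D^\sigma}\bD$, and reducing mod $t+1$ via $\ve$) and invoking flatness to see they commute: one has $\CS(X_K) \otimes_{D^\sigma}\bD \otimes_{\bD}(\bD/(1+t)) \cong \CS(X_K)\otimes_{D^\sigma}(D^\sigma/(1+t)D^\sigma) \cong \ve(\CS(X_K)) \otimes_{\BC[M^{\pm1}]^\sigma} \BC(M) = \overline{\fs(X_K)}$, using $\ve(\CS(X_K))=\fs(X_K)=\BC[\chi(X_K)]$. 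The analogous bookkeeping for the source $\bT_2$ versus $\bar{\ft}_2$ is formal since $\ve(\CT_2^\sigma) = \ft_2^\sigma$ by the defining skein relations at $t=-1$. Once this compatibility is in place, the Nakayama step is routine, and the lemma follows. (Alternatively, since this is asserted with a reference to the analogous Lemma in \cite{LTaj} for the full skein module, one may simply restrict the proof there to $\CT_2^\sigma \subset \CT^\sigma$; the argument is identical.)
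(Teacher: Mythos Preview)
Your proposal is correct and follows essentially the same approach as the paper: both arguments reduce $\bTheta_2$ modulo $(1+t)$ to recover $\bbtheta_2$ via the commutative diagram, and then invoke Nakayama's lemma over the DVR $\bD$ to lift surjectivity. The only cosmetic difference is that the paper lifts a $\BC(M)$-basis of $\overline{\fs(X_K)}$ to elements of $\overline{\CS(X_K)}$ chosen in the image of $\bTheta_2$ and observes these lifts span, whereas you apply Nakayama directly to the cokernel $N/N'$; your more explicit treatment of the base-change identification $\overline{\CS(X_K)}\otimes_{\bD}\bD/(1+t)\cong\overline{\fs(X_K)}$ is something the paper simply takes for granted by reference to \cite{LTaj}.
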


\begin{proof}
The proof is similar to that of \cite[Lemma 3.4]{LTaj}. Consider the following commutative diagram
\be 
\label{dia.1a}
\begin{CD}  \bT_2   @>\bTheta_2  >> \overline{\CS(X_K)} \\
  @V \varepsilon VV  @V\ve VV \\
  \bbt_2     @>\bbtheta_2 >> \overline{\fs(X_K)}
\end{CD}
\ee

Suppose $\{x_1,\dots, x_d \}$ is a basis of the $\BC(M)$-vector space $\overline{\fs(X_K)}$. Let $\ov{x}_k \in \overline{\CS(X_K)}$ be a lift of $x_k$. By Nakayama's Lemma, $\{\ov x_1,\dots, \ov x_d \}$ spans $\overline{\CS(X_K)}$ over $\bD$.
Since $\bbtheta_2$ and $\ve$ in diagram \eqref{dia.1a} are surjective, each $\ov x_k$ is in the image of $\bTheta_2$. This proves  that $\bTheta_2$ is surjective.
\end{proof}

\begin{proposition}
\label{surj}
Suppose $\bbtheta_2$ is surjective and $\overline{\CS(X_K)}$ is finitely generated over $\bar{D}$. Then $\ve(\gamma_K(\ell, M)) \mid C_K(\ell, M)$ in $\BC(M)[\ell]$.
\end{proposition}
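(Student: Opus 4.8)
The plan is to transport the surjectivity of $\bar\Theta_2$ (Lemma \ref{lem.surj}) through the commutative diagram relating the quantum and classical settings, and then read off the divisibility at the level of generators of kernels. The key point is that $\gamma_K(\ell,M)$ generates the principal left ideal $\tilde\CP_2 = \CP_2\cdot\tilde\CT_2$, while $C_K(\ell,M)$ generates $\bar\fp_2 = \ker\bar\theta_2$; since $\ve(\gamma_K)$ and $C_K$ both live in the commutative PID $\BC(M)[\ell]$, divisibility $\ve(\gamma_K)\mid C_K$ is equivalent to the inclusion $(\ve(\gamma_K)) \supseteq (C_K)$, i.e. to showing that the image of $\ker\bar\Theta_2$ under $\ve$ lands inside $\ker\bar\theta_2$.

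First I would invoke Lemma \ref{lem.surj} to conclude $\bar\Theta_2:\bar\CT_2 \to \overline{\CS(X_K)}$ is surjective under the stated hypotheses. Next, using Proposition \ref{lem.ideal} applied to the left ideal $I := \ker\bar\Theta_2 \subset \bar\CT_2$, write the generators of $I$ as $h_0\gamma, (1+t)h_1\gamma, \dots, (1+t)^{m-1}h_{m-1}\gamma, (1+t)^m\gamma$, where $\gamma$ generates the principal left ideal $\tilde I = I\cdot\tilde\CT_2 \subset \tilde\CT_2$. On the other hand, extending scalars from $\bar D$ to $\BC(M)$ (equivalently, since $\bar\Theta_2$ is the restriction of the map defining $\tilde\CP_2$), one sees $\tilde I = \tilde\CP_2$, so $\gamma$ agrees with $\gamma_K(\ell,M)$ up to a unit, i.e. up to a factor in $\BC(M)$. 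Thus, applying $\ve$ (reducing $t=-1$, which kills every factor of $(1+t)$) to the generators of $I$, the image $\ve(I)$ of the ideal is generated over $\BC(M)[\ell]$ by $\ve(h_0)\,\ve(\gamma_K)$, and in particular every element of $\ve(I)$ is divisible by $\ve(\gamma_K(\ell,M))$ in $\BC(M)[\ell]$.

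Then I would run the commutative square \eqref{dia.1a}: for any $P \in \bar\ft_2$ with $\bar\theta_2(P)=0$, surjectivity of $\bar\Theta_2$ lets me lift $P$ to some $\tilde P \in \bar\CT_2$ with $\ve(\tilde P)=P$; the commutativity $\ve\circ\bar\Theta_2 = \bar\theta_2\circ\ve$ shows that $\ve(\bar\Theta_2(\tilde P)) = \bar\theta_2(P) = 0$ in $\overline{\fs(X_K)}$, so $\bar\Theta_2(\tilde P) \in \ker(\ve:\overline{\CS(X_K)}\to\overline{\fs(X_K)})$; since $\overline{\CS(X_K)}$ is a module over the discrete valuation ring $\bar D$ with uniformizer $(1+t)$, this kernel is $(1+t)\overline{\CS(X_K)}$, so after subtracting a suitable $\bar D$-combination of lifts one may assume $\bar\Theta_2(\tilde P) = 0$, i.e. $\tilde P \in I = \ker\bar\Theta_2$ (here one uses that $\bar\Theta_2$ is surjective to correct $\tilde P$ by an element of $I$ without changing $\ve(\tilde P)=P$). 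Hence $P = \ve(\tilde P) \in \ve(I)$, and by the previous paragraph $\ve(\gamma_K)\mid P$. Taking $P = C_K(\ell,M)$, which satisfies $\bar\theta_2(C_K)=0$ by definition of the $C$-polynomial, gives $\ve(\gamma_K(\ell,M))\mid C_K(\ell,M)$ in $\BC(M)[\ell]$.

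The step I expect to be the main obstacle is the careful bookkeeping in the correction argument: matching $\gamma$ from Proposition \ref{lem.ideal} with $\gamma_K$ (one must check the extension $I\cdot\tilde\CT_2$ really recovers $\tilde\CP_2$, not something larger, which uses flatness of the localizations already established), and verifying that the lift $\tilde P$ of $C_K$ can be chosen inside $\ker\bar\Theta_2$ rather than merely mapping into $(1+t)\overline{\CS(X_K)}$ — this is where surjectivity of $\bar\Theta_2$ and the discrete-valuation-ring structure of $\bar D$ are both essential, and where the analogous argument in \cite[Lemma 3.4]{LTaj} and \cite{LTaj} is the model to follow. The rest is formal diagram-chasing in PIDs.
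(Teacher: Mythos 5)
Your proposal is correct and follows essentially the same route as the paper: surjectivity of $\bar\Theta_2$ via Lemma \ref{lem.surj}, the generator description of $\ker\bar\Theta_2$ from Proposition \ref{lem.ideal}, and the observation that $\ve$ kills all generators except $h_0\gamma$, so that $\ve(\ker\bar\Theta_2)$ is principal, generated by $\ve(h_0)\ve(\gamma_K)$, and equals $\ker\bbtheta_2 = (C_K)$. The only cosmetic difference is that the paper gets the identity $\ker\bbtheta_2 = \ve(\ker\bar\Theta_2)$ by tensoring the exact sequence $0\to\bar\CP_2\to\bar\CT_2\to\overline{\CS(X_K)}\to 0$ with $\BC(M)$ over $\bar D$ and using right-exactness, whereas you unwind the same fact by an explicit lift-and-correct diagram chase.
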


\begin{proof}
Again, the proof is similar to that of \cite[Proposition 3.5]{LTaj}. By Lemma \ref{lem.surj}, $\bTheta_2$ is surjective.
Diagram~\eqref{dia.1a} can be extended to   the following commutative diagram
with exact rows:
$$ \begin{CD}  0 @ >>> \bP_2 @ >\iota>> \bT_2   @>\bTheta_2 >> \overline{\CS(X_K)}  @>>>  0\\
@.  @ V h VV @V \varepsilon VV  @V \varepsilon VV \\
0 @ >>>     \bbp_2  @ >>>  \bbt_2     @>\bbtheta_2 >> \overline{\fs(X_K)} @>>> 0
\end{CD}
$$
where $\bar{\CP}_2 := \ker \bar{\Theta}_2 \subset \bar{\CT}_2$.

Taking the tensor product of the first row, which is an exact sequence of $\bD$-modules, with the $\bD$-algebra $\BC(M)$, we get the exact sequence
$$ \bP_2  \otimes_{\bD} \BC(M) \overset{\ve(\iota)} \longrightarrow  \bbt_2 \overset{\bbtheta_2} \longrightarrow \overline{\fs(X_K)} \to 0.$$
This implies that  $\bbp_2 = \ker(\bbtheta_2) = \IM(\ve(\iota))= \IM(\ve \circ \iota)=   h(\bP_2)$.

Suppose $\{g_k:=(1+t)^k h_k \gamma, k=0,1,\dots,m\}$ (with $h_m=1$) be a set of generators of $I=\bP_2$ as described in  Proposition \ref{lem.ideal}.
Then $h(g_k)=\ve(g_k)=0$ except possibly for $k=0$. It follows that $\bbp_2 = h(\bP_2)$ is the principal ideal generated by $\ve(g_0)$. Hence $\ve(g_0)=C_K(\ell, M) \neq 0$.
On the other hand, it is clear that $\gamma_K(\ell, M) = \gamma$. Therefore  $\ve(\gamma_K(\ell, M)) \mid \ve(g_0)= C_K(\ell, M)$.
\end{proof}

\section{Proof of main results} \label{twist}

In this last section we prove the main results, namely Theorems 1--3 and Corollary 1.

\subsection{Proof of Theorem \ref{main}} Since $K \subset S^3$ satisfies conditions $(i)$--$(v)$ of Theorem \ref{main}, Propositions \ref{surj-2} and \ref{surj} imply that
$
\ve(\gamma_K(\ell, M)) \mid C_K(\ell, M) = (\ell -1) R_K(\ell, M).$ Since $\gamma_K(L^2, M) J_K(n)=0$, by Lemma \ref{L2} we have $\gamma_K(L, t^2M^2)\BJ_K(n)=0$. This means that $\gamma_K(L, t^2M^2)$ is divisible by $\alpha_{\BJ_K}$ in  $\tilde{\CT}$. Hence 
\begin{equation}
\label{e1}
\ve(\alpha_{\BJ_{K}}) \text{ divides } (L -1) R_K(L, M^2) \text{ in } \BC(M)[L].
\end{equation} 

By \cite[Proposition 2.6]{Traj8}, $\ve(\alpha_{\BJ_{K}})$ is divisible by $L-1$.  Since the breadth of the degree of the colored Jones function $J_K(n)$ is not a linear function in $n$, the proof of \cite[Proposition 2.5]{Traj8} implies that $\alpha_{\BJ_K}$ has $L$-degree $>1$. Hence 
\begin{equation}
\label{e2}
\frac{\ve(\alpha_{\BJ_{K}})}{L-1} \text{ is a polynomial in } \BC(M)[L] \text{ of $L$-degree } \ge 1.
\end{equation} 

Since $A_K(L,M) \in \BC[L,M]$ is irreducible and $A_K(L,M) \not= A_K(-L,M)$, by Proposition \ref{A-irreducible} we have $R_{K}(L,M^2)$ is irreducible in $\BC[L,M]$. This, together with \eqref{e1} and \eqref{e2}, implies that for all knots $K \subset S^3$ satisfying conditions $(i)$--$(vi)$ of Theorem \ref{main} we have
\begin{equation}
\label{c3}
\frac{\ve(\alpha_{\BJ_{K}})}{L-1} = R_{K}(L,M^2)
\end{equation} up to a factor depending on $M$ only. 

Therefore, if the conditions that $K \in \CK$ and $r$ is an odd integer with $(r - r_{K}^+)(r - r_{K}^-)>0$ are also satisfied, then by \eqref{c3} and Proposition \ref{div} we obtain
\begin{eqnarray*}
(L-1)A_{K^{(r,2)}}(L,M) &=& (L-1)R_{K}(L,M^2)(L+M^{-2r})\\
                &= &\ve(\alpha_{\BJ_{K}})(L+M^{-2r})\\
                &=&\ve(\alpha_{K^{(r,2)}})
\end{eqnarray*}
up to a factor depending on $M$ only. This completes the proof of Theorem \ref{main}.

\subsection{Two-bridge knots} For a non-trivial adequate knot $K$, the breadth of the degree of the colored Jones polynomial $J_K(n)$ is a quadratic polynomial in $n$, by \cite[Lemma 5.4]{Li} and \cite[Proposition 2.1]{Le06}. Moreover, by \cite[Lemma 3.2]{Tr-twist} we have $K \in \CK$ and $$r_{K}^+ = 4c_K^+ \quad \text{and} \quad r_{K}^- = 4c_K^-$$
where $c^{\pm}_K$ is the number of positive/negative crossings of $K$.

For a two-bridge $K$, its skein module $\CS(X_K)$ is a free $\CR[M^{\pm 1}]^{\sigma}$-module of finite rank and its universal character ring $\fs(X_K)$ is reduced, by \cite{Le06}. Moreover, by \cite[Section 3]{Na} we have $A_K(L,\sqrt{-1}) = (L-1)^k$ for some $k \ge 1$. This implies that $A_K(L,M)$ contains at least one term $L^i M^k$, where $i$ is odd. In particular, we have $A_K(L,M) \not= A_K(-L,M)$. 

Theorem \ref{main:two-bridge} now follows from Theorem \ref{main}. 

Double twist knots $J(k,l)$, with $k \not= l$, have irreducible non-abelian character varieties by \cite{MPL}. Hence Corollary 1 follows from Theorem \ref{main:two-bridge}.

\subsection{Pretzel knots} Let $K(m)$ denote the $(-2,3,2m+3)$-pretzel knot. Then $K(m)$ belongs to the set $\CK$ by \cite{KT2}. It is known that $K(m)$ is a torus knot if $|m| \le 1$, and is a hyperbolic knot if $|m| \ge 2$. Since $K(-2)$ is the twist knot $5_2$, Theorem \ref{main:pretzel} for $K(-2)$ follows from Theorem \ref{main:two-bridge}. Hence we consider the two cases $m \ge 2$ and $m \le -3$ only.

For $m \ge 2$ we have
\begin{eqnarray*}
d_-[J_{K(m)}(n)] &=& (m + 5/2) (n-1),\\
d_+[J_{K(m)}(n)] &=& \left( \frac{5}{2} + m + \frac{1}{4m} \right) n^2 + \left( \frac{1}{2m} - \frac{1}{2} \right) n - \left( 3 + \frac{3m}{4} - \frac{1}{4m} \right) - m r^2_{n-1},
\end{eqnarray*}
where $r_n$ is a periodic sequence with $|r_n| \le 1/2$. Hence 
$$r^{-}_{K(m)} = 0 \quad \text{and} \quad r^{+}_{K(m)} \le \frac{33m}{4} + \frac{3}{m} + 19.$$

For $m \le -3$ we have
\begin{eqnarray*}
d_+[J_{K(m)}(n)] &=& 5n^2/2 + (1 + m) n - (7/2 + m),\\
d_-[J_{K(m)}(n)] &=& \frac {2(m+2)^2} {2m+3} \, n^2 + b(n) n - (6m+17)/8 - (m + 3/2) r^2_{n-1},
\end{eqnarray*}
where $r_n$ is a periodic sequence with $|r_n| \le 1/2$, and $b(n) = \begin{cases} 1/2   &\mbox{if } (2m+3) \nmid n, \smallskip \\ 
\frac {2m+1} {2( 2m+3)} & \mbox{if } (2m+3) \mid n. \end{cases}$
Hence
$$r^{-}_{K(m)} \le \frac{33m}{4} + \frac{6}{2m+3} + \frac{171}{8} \quad \text{and} \quad r^{+}_{K(m)} = 20.$$

For the $(-2,3,2m+3)$-pretzel knot $K(m)$, its localized skein module $\overline{\CS(X_{K(m)})}$ is finitely generated over $\bD$ and its universal character ring $\fs(X_{K(m)})$ is reduced, by \cite{LTaj}. The breadth of the degree of the colored Jones polynomial $J_{K(m)}(n)$ is not a linear function in $n$, by \cite{Ga-slope}. Moreover, if $\gcd(3,m)=1$ then the non-abelian character variety of $\pi_1(X_{K(m)})$ is irreducible, by \cite{Ma} (see also \cite{Tr-agt}). 

If $P(L,M)=\sum_{i,k} a_{ik} L^i M^k \in \BC[L,M]$ then its Newton polygon is the smallest convex set in the plane containing all integral lattice points $(i,k)$ for which $a_{ik} \not=0$. 

By \cite{Ma} (see also \cite{GM}), the Newton polygon of the $A$-polynomial of the $(-2,3,2m+3)$-pretzel knot $K(m)$ is a hexagon with the following vertex set
\begin{equation*}
\begin{split}
\{ & (0, 0), (1, 16), (m - 1, 4 (m^2 + 2 m - 3)), (2 m + 1, 
  2 (4 m^2 + 10 m + 9)), \\ & (3 m - 1, 2 (6 m^2 + 14 m - 5)), (3 m, 
  2 (6 m^2 + 14 m + 3))\}
\end{split}
\end{equation*}
if $m>1$,
\begin{equation*}
\begin{split}
\{ & (-3 m - 4, 0), (-3 (1 + m), 10), (-3 - 2 m, 4 (3 + 4 m + m^2)), \\
& (-m,  2 (4 m^2 + 16 m + 21)), (0, 4 (3 m^2 + 12 m  + 11)), (1, 
  6 (2 m^2 + 8 m + 9))\}
\end{split}
\end{equation*}
if $m<-2$
and
$$
\{(0, 0), (1, 0), (2, 4), (1, 10), (2, 14), (3, 14)\}
$$
if $m=-2$. This implies that $A_{K(m)}(L,M)$ contains at least one term $L^i M^k$, where $i$ is odd. In particular, we have $A_{K(m)}(L,M) \not= A_{K(m)}(-L,M)$. 

Theorem \ref{main:pretzel} now follows from Theorem \ref{main}.

\end{document}